\tikzset{snake it/.style={decorate, decoration=snake}}
\tikzstyle{edge}=[very thick]
\definecolor{bostonuniversityred}{rgb}{0.8, 0.0, 0.0}
\definecolor{arsenic}{rgb}{0.23, 0.27, 0.29}
\tikzstyle{diredge}=[postaction={decorate,decoration={markings,
\tikzset{
    arrow/.style={decoration={markings, mark=at position 0.7 with
    {\fill(-0.09*#1,-0.03*#1) -- (0,0) -- (-0.09*#1,0.03*#1) -- cycle;}}, postaction={decorate}},
    arrow/.default=1
}
\tikzset{
    arow/.style={decoration={markings, mark=at position 1 with
    {\fill(-0.09*#1,-0.03*#1) -- (0,0) -- (-0.09*#1,0.03*#1) -- cycle;}}, postaction={decorate}},
    arow/.default=1
}
\tikzset{
    arrrow/.style={decoration={markings, mark=at position 0.9 with
    {\fill(-0.09*#1,-0.03*#1) -- (0,0) -- (-0.09*#1,0.03*#1) -- cycle;}}, postaction={decorate}},
    arow/.default=1
}
\newcommand{\fitellipsis}[2] 
{\draw [fill=white]let \p1=(#1), \p2=(#2), \n1={atan2(\y2-\y1,\x2-\x1)}, \n2={veclen(\y2-\y1,\x2-\x1)}
    in ($ (\p1)!0.5!(\p2) $) ellipse [ x radius=\n2/2+0cm, y radius=1.1cm, rotate=\n1];
}
\newcommand{\Fitellipsis}[2] 
{\draw [fill=white]let \p1=(#1), \p2=(#2), \n1={atan2(\y2-\y1,\x2-\x1)}, \n2={veclen(\y2-\y1,\x2-\x1)}
    in ($ (\p1)!0.5!(\p2) $) ellipse [ x radius=\n2/2+0cm, y radius=1.4cm, rotate=\n1];
}
\theoremstyle{plain}
\newtheorem*{thm*}{Theorem}
\newtheorem{thm}{Theorem}[section]
\Crefname{thm}{Theorem}{Theorems}
\newtheorem*{lem*}{Lemma}
\newtheorem{lem}[thm]{Lemma}
\Crefname{lem}{Lemma}{Lemmas}
\newtheorem*{claim*}{Claim}
\crefname{claim}{Claim}{Claims}
\Crefname{claim}{Claim}{Claims}
\newtheorem{prop}[thm]{Proposition}
\Crefname{prop}{Proposition}{Propositions}
\Crefname{remar}{Remark}{Remarks}
\crefname{cor}{Corollary}{Corollaries}
\newtheorem*{conj*}{Conjecture}
\newtheorem{conj}[thm]{Conjecture}
\crefname{conj}{Conjecture}{Conjectures}
\Crefname{qn}{Question}{Questions}
\Crefname{obs}{Observation}{Observations}
\Crefname{ex}{Example}{Examples}
\theoremstyle{definition}
\Crefname{prob}{Problem}{Problems}
\Crefname{defn}{Definition}{Definitions}
\theoremstyle{remark}
\newcommand{\remove}[1]{}
\newcommand{\eps}{\varepsilon}
\title{\vspace{-1 cm}
P\'osa rotation through a random permutation}
\date{}
\crefname{enumi}{property}{parts}
\newcommand{\es}{\emptyset}
\newcommand{\mb}[1]{\mathbb{#1}}
\newcommand{\var}{\operatorname{Var}}
\newcommand{\sub}{\subseteq}
\newcommand{\sm}{\setminus}
\author{
Nemanja Dragani\'c\thanks{
Mathematical Institute, University of Oxford, UK. Research supported by SNSF project 217926.\\
\emph{email}: \textbf{nemanja.draganic@maths.ox.ac.uk}.
}
\and
Peter Keevash\thanks{Mathematical Institute, University of Oxford, UK. Research supported by ERC Advanced Grant 883810.\\
\emph{email}: \textbf{keevash@maths.ox.ac.uk}.
}}
\begin{document} 
\maketitle
\begin{abstract}
What minimum degree of a graph $G$ on $n$ vertices guarantees that the union of $G$ 
and a random $2$-factor (or permutation) is with high probability Hamiltonian?
Girão and Espuny D{\'\i}az showed that the answer lies in the interval $[\tfrac15 \log n, n^{3/4+o(1)}]$.
We improve both the upper and lower bounds to resolve this problem asymptotically,
showing that the answer is $(1+o(1))\sqrt{n\log n/2}$.
Furthermore, if $G$ is assumed to be (nearly) regular then we obtain the much stronger bound
that any degree growing at least polylogarithmically in $n$ is sufficient for Hamiltonicity. 
Our proofs use some insights from the rich theory of random permutations 
and a randomised version of the classical  technique of P\'osa rotation
adapted to multiple exposure arguments.
\end{abstract}

\section{Introduction}

A Hamilton cycle in a graph $G$ is a cycle that traverses all vertices;
we call $G$ Hamiltonian if it has such a cycle. This core concept in Graph Theory 
has received significant attention in prominent papers (see~\cite{ajtai1985first,bollobas1987algorithm,chvatal1972note,chvatal1972hamilton,MR3545109,cuckler2009hamiltonian,ferber2018counting,hefetz2009hamilton,jackson1980hamilton,krivelevich2011critical,krivelevich2014robust,kuhn2014proof,kuhn2013hamilton,nash1971edge,posa:76})
and surveys (see~\cite{gould2014recent, MR3727617}). 
Determining whether a graph is Hamiltonian is an NP-complete problem, 
so much research seeks simple conditions that imply Hamiltonicity. 
From the perspective of Extremal Graph Theory it is natural to consider the minimum degree; 
here the seminal result is Dirac's theorem~\cite{dirac1952some}
that any graph with $n \geq 3$ vertices and minimum degree at least $n/2$ is Hamiltonian.
Hamiltonicity is also well-studied in Random Graph Theory, where there are very precise results
for Erdos-R\'enyi graphs $G(n,p)$ and random regular graphs $G_{n,d}$ 
(see the original papers \cite{bollobas1984evolution,komlos1983limit,robinson1994almost,robinson1992almost,robinson2001hamilton,krivelevich2001random,cooper2002random} and the textbooks \cite{bollobas2001random,janson2011random});
in particular, for any $d \in [3,n-1]$ with high probability (whp)  $G_{n,d}$ is Hamiltonian
(and this does not hold for $d=1,2$).

In this paper we consider a problem at the interface of the extremal and random questions mentioned above,
namely Hamiltonicity of a graph $G \cup F$ obtained by superimposing on the same set of $n$ vertices
any graph $G$ of given minimum degree $\delta(G)$ and a random $2$-factor $F \sim G_{n,2}$.
This lies within the general framework of randomly perturbed graphs, where one asks whether the superposition 
of some deterministic graph $G$ and random graph $R$ whp satisfies some specified graph property.
This framework was introduced by Bohman, Frieze and Martin \cite{bohman2003how},
who showed that for $\alpha>0$ there is a constant $C=C(\alpha)$ such that
if $\delta(G) \ge \alpha n$ and $R \sim G(n,p)$ with $p>C/n$ then whp $G \cup R$ is Hamiltonian.
The interesting feature of such results is that the extremal and probabilistic assumptions together
guarantee the property, despite each individually falling far short
(one needs $\delta(G) \ge n/2$ or $p > (\log n + \log \log n + \omega(1))/n$).

Espuny Díaz and Girão~\cite{espuny2023hamiltonicity} considered 
the similar problem where now $R \sim G_{n,d}$ is a random $d$-regular graph.
For $d=1$ they showed that $\delta(G) \ge \alpha n$ with $\alpha > \sqrt{2}-1$ guarantees hamiltonicity,
where the value of $\alpha$ cannot be reduced, whereas for $d=2$ they showed that 
$\delta(G) > n^{3/4 + o(1)}$ is sufficient but $\delta(G) > \tfrac15 \log n$ is insufficient.
Here we resolve this problem asymptotically by improving both bounds.

\begin{thm}\label{thm:min deg}
 For any $\eps>0$ there is $n_0$ so that the following holds for all $n>n_0$.
 Suppose $G$ is an $n$-vertex graph with $\delta(G)\geq (1+\eps)\sqrt{n\log n/2}$. 
 Let $F\sim G_{n,2}$ be a random $2$-regular graph  on the same vertex set as $G$. 
 Then whp  $G\cup F$ is Hamiltonian. 
\end{thm}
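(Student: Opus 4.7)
Set $d = (1+\varepsilon)\sqrt{n\log n/2}$. I would view $F \sim G_{n,2}$ as arising from a uniform random permutation $\sigma$ on $V$ (after conditioning away fixed points and $2$-cycles), so that the cycle structure of $F$ is controlled by the classical theory of random permutations: whp $F$ has $(\tfrac{1}{2}+o(1))\log n$ cycles, dominated by a giant cycle of linear length. The plan is a multi-stage exposure of $\sigma$ combined with a rotation-extension argument, reserving a portion of the randomness for the final ``closing'' step.

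First, I would use $G$-edges to merge the cycles of $F$ into a nearly-spanning path $P$ in $G \cup F$. Starting from the giant cycle of $F$ with one edge removed, each remaining cycle of $F$ is absorbed by a local modification using one or two $G$-edges; these are abundant because $\delta(G) \gg \log n$ and there are only $O(\log n)$ cycles to handle. Any residual uncovered vertices can be picked up via an absorber gadget built from $G$ and a small initial slice of $F$.

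The crucial step is closing $P$ into a Hamilton cycle. I would apply a randomised P\'osa rotation from both endpoints of $P$, revealing new fragments of $\sigma$ as needed, to produce sets $E_1, E_2$ of potential endpoints of size at least $d$ on either side. Conditionally on the exposed portion of $\sigma$, every pair $(u,v) \in E_1 \times E_2$ is still an $F$-edge with probability $\sim 2/n$, so the probability that no such pair closes the cycle is at most
\[
(1-2/n)^{|E_1|\cdot|E_2|} \;\le\; \exp(-2d^2/n) \;=\; n^{-(1+\varepsilon)^2},
\]
and a union bound over the $O(n)$ configurations considered gives success whp. This computation pinpoints the threshold: $2d^2/n = (1+\varepsilon)^2\log n$, and taking $\varepsilon>0$ is precisely what beats the union-bound factor of $n$.

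The main obstacle is implementing the multiple exposure cleanly. Since $F$ is a $2$-factor rather than a random graph, its edges are strongly correlated, and every partial exposure of $\sigma$ conditions the remaining randomness in intricate ways. The randomised P\'osa rotation is designed to navigate these correlations: at each rotation it reveals only the minimum information about $\sigma$ required to effect that rotation, so that the conditional distribution of the unexposed part remains tractable and, crucially, independent of the endpoint sets $E_1, E_2$ used for the closure. Verifying that the rotations still yield sets of size $\Omega(d)$ that are ``free'' in this conditional distribution is the heart of the technical argument.
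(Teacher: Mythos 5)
Your high-level framework (viewing $F=\pi(F^*)$ for a random permutation, multi-round exposure, merging cycles into a long path with $G$-edges, then a randomised closure) lines up with the paper's, and your numerical derivation of the threshold $2d^2/n \sim (1+\eps)^2\log n$ recovers the right answer. But the mechanism you use for the closing step is different from the paper's, and it is exactly there that your argument has a real gap.

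You close the cycle by hoping for an unexposed $F$-edge between the two rotation endpoint sets $E_1, E_2$, asserting that conditionally each $(u,v)\in E_1\times E_2$ is still an $F$-edge with probability $\sim 2/n$ independently of the exposure used to produce $E_1,E_2$. This is where things break. To perform P\'osa rotation at an endpoint $b$ you must know which vertices of $N_G(b)$ lie on $P$ and in which positions, i.e.\ you have to expose $\pi^{-1}$ on a $\Theta(d)$-sized set; similarly for the other end. After that exposure, the vertices of $E_1\cup E_2$ and their path-neighbours occupy known slots of $\pi$, so the $F$-edges incident to them are largely pinned down (each vertex has exactly two $F$-neighbours, and they are its cycle-neighbours, which rotation has just revealed). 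There is no general reason why any residual $F$-edge between $E_1$ and $E_2$ should still be ``free'' with probability $\sim 2/n$, let alone why $|E_1||E_2|$ such events should behave approximately independently in a $2$-factor. You flag this as ``the heart of the technical argument,'' but it is not a detail to be filled in later: the tension is intrinsic, because building $P$ from $F$ and exposing $P$'s structure for rotations consume precisely the $F$-randomness you want to save for closure. The ``$O(n)$ configurations'' over which you union-bound is also unspecified; with a single nearly-spanning path there is no natural $\Theta(n)$-fold event to union over, so the numerics $n\cdot n^{-(1+\eps)^2}\to 0$, while suggestive, do not attach to a concrete failure decomposition.

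The paper escapes this trap by never asking $F$ for the closing edge. It instead expends the leftover randomness in the \emph{ordering} of unexposed vertices along $P_0$: for each short cycle it finds $\sim(1+\eps)^2\log n$ ``good'' ordered $G$-edge pairs $xy$ (with $xb_{i-1}, ya_i\in E(G)$) via a Talagrand-type concentration bound, arranges these into a matching $M$ on $P_0$ so that the conditional law of $\pi$ makes each such pair's orientation a fair coin, and then plays $\sim\tfrac12\log n$ Bernoulli$(1/2)$ trials against a budget of $(1+\eps)\log n$ available pairs, winning by Chernoff. This is where $\eps$ really does its work, and it sidesteps all correlation issues with $F$-edges. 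If you want to rescue your approach, you would need to deliberately withhold a positive fraction of $F$ from the path-building phase (paying for it with an absorber as you suggest) and then argue that the reserved $F$-edges are genuinely independent of the rotation exposure; carried out carefully this seems considerably harder and less natural than the paper's route.
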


\begin{thm}\label{thm:lower bound minimum degree}
  For any $\eps>0$ there is $n_0$ so that the following holds for all $n>n_0$.
  There exists an $n$-vertex graph $G$ with minimum degree $(1-\eps)\sqrt{n\log n/2}$ so that
  if $F\sim G_{n,2}$ is a random $2$-regular graph on the same vertex set as $G$
  then whp $G\cup F$ is not Hamiltonian. 
\end{thm}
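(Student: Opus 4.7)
The plan is to construct an explicit graph $G$ with $\delta(G)=(1-\eps)\sqrt{n\log n/2}$ and exhibit a toughness-type obstruction to Hamiltonicity of $G\cup F$ that holds whp. Set $d:=(1-\eps)\sqrt{n\log n/2}$ and partition $V=B\sqcup T$ with $|B|=d$. Take $G$ so that $B$ is a clique, $T$ is an independent set in $G$, and every vertex of $T$ is joined to every vertex of $B$; then $\delta(G)=d$. The obstruction invokes the classical necessary condition for Hamiltonicity: if $c(H-S)>|S|$ for some $S\sub V(H)$, where $c(\cdot)$ counts connected components, then $H$ has no Hamilton cycle. I apply this with $H=G\cup F$ and $S=B$.

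Because $G[T]$ is empty, $(G\cup F)-B=F[T]$, which as a subgraph of the $2$-regular $F$ has maximum degree at most $2$ and is therefore a disjoint union of paths and cycles. For any such graph the component count equals $|V|-|E|+(\text{cycles})$; combined with the degree identity $|E(F[T])|=n-2|B|+|E(F[B])|$ (from $\sum_{v\in B}\deg_F(v)=2|B|$), this yields the clean expression
\[
c\bigl((G\cup F)-B\bigr)\;=\;|B|-|E(F[B])|+C_T,
\]
where $C_T$ is the number of cycles of $F$ lying entirely in $T$. Non-Hamiltonicity therefore reduces to showing
\[
C_T>|E(F[B])|\qquad\text{whp}.
\]

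To establish this inequality I appeal to the classical distributional theory of the cycle structure of a random $2$-factor. Via contiguity of $G_{n,2}$ with the random-permutation model, the number of $k$-cycles of $F$ is asymptotically Poisson with mean $\sim 1/(2k)$, with different cycle lengths approximately independent. A direct computation weighting a specific $k$-cycle by the probability $\sim(|T|/n)^k$ of lying entirely in $T$ gives $\mathbb{E}[C_T]\sim\tfrac12\log(n/|B|)$, while $\mathbb{E}[|E(F[B])|]\sim|B|^2/n$ from pairwise edge probabilities. Both quantities concentrate around their means via Chen--Stein or Janson-type inequalities applied to the relevant local indicators, and the critical value at which these two rates balance produces the constant $\sqrt{n\log n/2}$ in the theorem.

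The hardest step will be handling the \emph{joint} distribution of $C_T$ and $|E(F[B])|$ at the sharp threshold. Because both are functionals of the same $F$, treating them as independent yields only the weaker threshold $\sqrt{n\log n/4}$; closing the $\sqrt2$-gap requires an appropriate coupling. I would proceed by conditioning on the long-cycle part of $F$ (which carries most of $|E(F[B])|$), leaving the short-cycle regime (which produces $C_T$) effectively free; under this conditioning the two statistics become approximately independent Poisson-like variables, and a standard large-deviation bound on $C_T-|E(F[B])|$ yields the whp conclusion. A small adjustment of the construction — for instance, randomizing the $T$-to-$B$ neighborhoods to create multiple candidate cutsets $B'\sub B$ and then taking a maximum over them — might additionally be needed to amplify the obstruction from marginal into high probability exactly at the threshold $(1-\eps)\sqrt{n\log n/2}$.
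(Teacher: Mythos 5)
Your construction and obstruction are exactly the ones used in the paper: take $G$ to be (essentially) complete bipartite with small side of size $\delta(G)$, remove the small side, and show whp that $F$ restricted to the large side $T$ has more than $|B|$ components. Your identity
\[
c\bigl((G\cup F)-B\bigr)\;=\;|B|-|E(F[B])|+C_T
\]
is a clean algebraic route to this (the paper counts the $B$-arcs of the $A$-touching cycles directly), and the reduction to showing $C_T>|E(F[B])|$ whp is correct; the extra clique on $B$ in your construction is harmless but unnecessary.

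The genuine gap is in your first-moment estimate for $C_T$, and in how you diagnose the resulting $\sqrt2$ discrepancy. You take the configuration-model heuristic that the number of $k$-cycles is Poisson with mean $\sim 1/(2k)$, which gives $\mathbb{E}[C_T]\sim\tfrac12\log(n/|B|)\sim\tfrac14\log n$; balanced against $\mathbb{E}[|E(F[B])|]\sim|B|^2/n$ this puts the threshold at $|B|\sim\tfrac12\sqrt{n\log n}=\sqrt{n\log n/4}$, as you yourself note. The paper instead generates $F$ from a uniformly random permutation conditioned on having no cycles of length at most $2$ (an event of probability $\Omega(1)$, so whp-statements transfer). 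For a uniform permutation the cycle through a fixed vertex has length uniform on $[n]$, hence the expected number of $k$-cycles is $1/k$, and \Cref{lem:permutation properties} gives $(1+o(1))\log t$ cycles of length at most $t$; taking $t=\sqrt n/\log n$ and noting that each short cycle avoids the small side with probability $1-o(1)$ yields $K=(\tfrac12-o(1))\log n$ cycles entirely in the large side, twice your estimate. The quantity $Z$ (edges of $F$ inside the small side) has $\mathbb{E}[Z]=\tfrac{|A|(|A|-1)}{n-1}\sim(1-\eps)^2\tfrac12\log n$ in either model, so the missing factor of $\sqrt2$ lives entirely in the cycle-count mean. Crucially, there is no joint-distribution subtlety to resolve: each of $K$ and $Z$ has variance $O(\log n)$ and concentrates by Chebyshev, and the mean gap $\mathbb{E}[K]-\mathbb{E}[Z]=\Theta_\eps(\log n)$ dominates the $O(\sqrt{\log n})$ fluctuations, so the inequality $K>Z$ holds whp without any coupling. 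Your proposed fixes (conditioning on the long-cycle part, coupling $C_T$ with $|E(F[B])|$, or maximizing over randomized cutsets $B'\sub B$) are therefore aimed at the wrong target and would not recover the constant; what is needed is the correct first-moment computation for the chosen model of $F$, together with an ordinary second-moment bound.
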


We also obtain the following result under the additional assumption
that $G$ is approximately regular, in the sense that $\Delta(G) \le K\delta(G)$ for some constant $K$,
where $\Delta(G)$ denotes the maximum degree of $G$. Here we show that it is sufficient
that the degrees have at least polylogarithmic growth.

\begin{thm}\label{thm:regular}
  Let $G$ be an $n$-vertex graph with $\delta(G)=\omega(\log^3n)$ and $\Delta(G)=O(\delta(G))$.
  Let $F\sim G_{n,2}$  be a random $2$-regular graph on the same vertex set as $G$. 
  Then whp $G\cup F$ is Hamiltonian. 
\end{thm}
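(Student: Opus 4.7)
My approach would be a multi-round exposure of $F$ combined with a randomised Pósa rotation, matching the technique alluded to by the paper's title. I would couple $F$ with a uniformly random permutation $\pi$ on $[n]$ so that the edges $\{i,\pi(i)\}$ essentially give a random 2-factor after conditioning away a negligible event. This coupling supports an exchangeable sequential exposure of $F$. The plan is to reveal most of $F$ in a first round to build structure, holding back a residual $F_1$ of $\Theta(n)$ random edges for a booster step at the end.

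In the first round I would produce a longest path $P$ in $G \cup (F \sm F_1)$ with a rich set of rotation endpoints via Pósa's rotation--extension argument. To make this work I need Pósa-type expansion of $G \cup (F \sm F_1)$, namely $|N(S) \sm S| \ge 2|S|$ for all $|S| \le \alpha n$. The near-regularity hypothesis $\Delta(G) = O(\delta(G))$ is crucial here: it rules out the extremal configurations underlying Theorem~\ref{thm:lower bound minimum degree}, in which a small $G$-clique on $\delta(G)+1$ vertices isolates itself, and more generally controls how many edges $G$ can concentrate inside any given small set. For medium-to-large $S$ the dominant contribution comes from the random $F$, which sends about $2|S|(n-|S|)/n$ edges out of $S$ with nearly distinct endpoints; standard tail bounds for random 2-factors (or permutations) deliver the required concentration via a union bound over dyadic size scales.

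Given expansion, the standard rotation argument yields a longest path $P$ together with $\Omega(n^2)$ ``booster pairs'' $(u,v)$ such that the presence of the edge $uv$ would complete a Hamilton cycle in $G \cup F$. In the second round I would reveal $F_1$: since under the coupling $F_1$ contributes $\Theta(n)$ (nearly) independent random pairs, a union bound shows that whp some booster pair appears in $F_1$, which closes up the Hamilton cycle.

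The main obstacle is the coupling and exposure: random 2-factors have rigid degree constraints, so the naïve sprinkling available for $G(n,p)$ does not apply, and one needs the permutation viewpoint to isolate residual randomness with the correct conditional distribution and enough true independence to drive the final booster union bound. A secondary obstacle is the expansion analysis for small sets close in size to $\delta(G)$, which sit in the extremal regime where neither $G$ nor $F$ alone gives $2|S|$ expansion; this is where the $\omega(\log^3 n)$ hypothesis, together with $\Delta(G) = O(\delta(G))$, combines to absorb the various union bounds on expansion witnesses and rotation failures, yielding the polylogarithmic rather than $\sqrt{n\log n}$ threshold.
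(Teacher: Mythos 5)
Your plan rests on establishing the Pósa expansion condition $|N_{G\cup F}(S)\setminus S|\ge 2|S|$ for all sets $S$ of size up to $\alpha n$, so that the rotation--extension machinery and the booster count $\Omega(n^2)$ become available. This step fails, and not for lack of concentration: the condition is simply false in expectation for a natural near-regular $G$. Take $G$ to be a disjoint union of cliques on $\delta(G)+1$ vertices (this is $\delta(G)$-regular, so satisfies both hypotheses of the theorem), and let $S$ be the union of some of the cliques with $|S|=\epsilon n$. Then $N_G(S)=\emptyset$, so all external neighbours of $S$ must come from $F$. But $F$ is $2$-regular, so the number of $F$-edges leaving $S$ is exactly $2|S|-2e_F(S)$, and $\mathbb{E}[e_F(S)]\sim |S|^2/n=\epsilon|S|$; after also accounting for collisions among the $\approx 2|S|(1-\epsilon)$ external endpoints (another $\Theta(\epsilon|S|)$ loss in expectation), one gets $|N_F(S)\setminus S|\approx (2-\Theta(\epsilon))|S| < 2|S|$ whp. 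So the expansion condition fails by a constant factor for a positive fraction of linear-size sets, which is fatal for the booster argument (one cannot even get off the ground at the rotation stage, let alone count $\Omega(n^2)$ boosters). Near-regularity controls $\Delta(G)$ but does nothing to guarantee that $G$ sends edges \emph{out} of a given set $S$; it rules out the bipartite lower-bound construction but not the clique-union example. There is also a secondary obstacle you partly flag yourself: holding back a residual $F_1$ of $\Theta(n)$ edges destroys the $2$-factor structure of $F\setminus F_1$ (it becomes a union of paths), and the conditional law of $F_1$ is a random partial bijection rather than $\Theta(n)$ nearly-independent pairs, so the final union bound would need more care.

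The paper's argument avoids worst-case expansion of $G\cup F$ entirely, and this is the essential conceptual difference. It conditions on the cycle type of $F$, writes $F=\pi(F^*)$ for a fixed $2$-factor $F^*$ and a uniformly random permutation $\pi$, takes one $\Omega(n)$-length cycle $C_1$ of $F^*$ as the backbone path, and reserves $O(\log n)$ disjoint random intervals within $\pi(C_1)$. Each remaining cycle of $F$ is then absorbed using a ``special vertex sequence'' found inside one reserved interval (Lemma~\ref{lem:special}), which is an iterated-expansion argument in $G$ restricted to a \emph{random} vertex subset (the image of the interval under $\pi$). Expansion of $G$ over random sets is true and quantitatively strong here -- this is what Lemma~\ref{lem:expansion} supplies and what the $\omega(\log^3 n)$ and $\Delta(G)=O(\delta(G))$ hypotheses are really calibrated for -- even though worst-case expansion of $G$ (or of $G\cup F$) can fail completely. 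In short, you need to replace ``Pósa expansion of $G\cup F$ over adversarial sets plus a booster round'' with ``local rotation via $G$-edges inside random intervals of the permuted long cycle''; the former is false, the latter is the engine of the proof.
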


The proofs of Theorems \ref{thm:min deg} and \ref{thm:regular}
use a randomised version of the classical technique of P\'osa rotation
adapted to multiple exposure arguments. 
We also rely on the well-developed theory of random permutations,
which one would expect to be relevant, as $F\sim G_{n,2}$ 
can be identified with a uniformly random permutation in $S_n$
conditioned on all cycles having length at least three.
Furthermore, it will be clear from the proofs that our results also hold
if one instead takes $F$ to be the graph of a uniformly random permutation
(ignoring any loops and parallel edges, which are anyway useless for making cycles).

\medskip

\emph{Notation.}
Let $G$ be a graph.
We denote its vertex set by $V(G)$, its edge set by $E(G)$,
its minimum degree by $\delta(G)$ and its maximum degree by $\Delta(G)$.
For $S \sub V(G)$, we let $\Gamma_G(S)$ be the set of vertices
that have a neighbour in $S$; we write $N_G(S) = \Gamma_G(S) \sm S$ 
for the external neighbourhood of $S$.
A $C_\ell$-factor on $V(G)$ is a disjoint union of cycles 
isomorphic to $C_\ell$ that span $V(G)$.
Our asymptotic notation $O(\cdot)$, $o(\cdot)$,
$\Omega(\cdot)$, $\omega(\cdot)$, etc refers to large 
$n:=|V(G)|$,~e.g.~$f(n)=\omega(1)$ means $f(n)\to \infty$ as $n \to \infty$

\section{Random permutations}

In this section we record some well-known properties of random permutations
and deduce the lower bound for our minimum degree result (\Cref{thm:lower bound minimum degree}).
For completeness we include the proofs, just using the straightforward Markov and Chebyshev inequalities
that $\mb{P}(X \ge t\mb{E}X) \le 1/t$ and $\mb{P}((X-\mb{E}X)^2 \ge t \var(X)) \le 1/t$
for any non-negative random variable $X$ and $t>0$.

\begin{lem}\label{lem:permutation properties}
Let $F \sim S_n$ be a random permutation or $F\sim G_{n,2}$ be a random $2$-regular graph on $n$ vertices.
Then for any $\eps \in (0,1/2)$, with probability at least $1-2\eps$ there is a cycle in $F$ of length at least $\eps n$.
Also, if $t=\omega(1)$ as $n\rightarrow \infty$ then whp the number of cycles in $F$ 
of length at most $t$ is $(1+o(1))\log t$;  in particular, whp $F$ contains $(1+o(1))\log n$ cycles.
\end{lem}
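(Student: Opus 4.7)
The plan is to combine elementary first- and second-moment computations on the cycle statistics of $F$ with Markov's and Chebyshev's inequalities, treating the permutation case $F\sim S_n$ first (where exact counts are immediate) and then indicating the routine adjustments needed when $F\sim G_{n,2}$.

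For part 1, the starting point is the classical identity that for $F\sim S_n$ and any fixed vertex $v$, the length $K$ of the cycle of $F$ containing $v$ is uniform on $\{1,\ldots,n\}$: the number of permutations placing $v$ in a $k$-cycle is $\binom{n-1}{k-1}(k-1)!(n-k)!=(n-1)!$, so $\mb{P}(K=k)=1/n$ and hence $\mb{P}(K\ge \eps n)\ge 1-\eps$. In the $G_{n,2}$ model the analogous count uses the classical asymptotic $R_n \sim n!\,e^{-3/4}/\sqrt{\pi n}$ for the number of $2$-regular graphs on $n$ labelled vertices, derived from singularity analysis of the exponential generating function $R(x)=e^{-x/2-x^2/4}/\sqrt{1-x}$; this yields $\mb{P}(K=k)\sim 1/\bigl(2\sqrt{n(n-k)}\bigr)$ for $3\le k\le n-3$, and summing over $k\le \eps n$ gives a tail of $1-\sqrt{1-\eps}+o(1)\le \eps+o(1)\le 2\eps$ for $n$ large enough.

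For part 2, write $C_k$ for the number of $k$-cycles of $F$ and set $X_t=\sum_{k\le t}C_k$. When $F\sim S_n$, direct enumeration gives $\mb{E}[C_k]=1/k$; counting ordered pairs of disjoint cycles yields $\mb{E}[C_jC_k]=1/(jk)$ whenever $j\ne k$ and $j+k\le n$, and $\mb{E}[C_k^2]=1/k+1/k^2$ for $2k\le n$. Thus the $C_k$ are pairwise uncorrelated and $\var(X_t)\sim\sum_{k\le t}1/k=H_t\sim \log t=\mb{E}[X_t]$. Chebyshev then gives $\mb{P}\bigl(|X_t-H_t|\ge \eps H_t\bigr)\le 1/(\eps^2 H_t)\to 0$ whenever $t=\omega(1)$, so $X_t=(1+o(1))\log t$ whp; taking $t=n$ (with a small bookkeeping to absorb pairs with $j+k>n$, which contribute only an $O(1)$ error to the variance) recovers the total cycle count claim.

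The one step requiring some extra care is the $G_{n,2}$ version of part 2, where the counts for $\mb{E}[C_k]$ and $\mb{E}[C_jC_k]$ involve ratios $R_{n-k}/R_n$ and $R_{n-j-k}/R_n$. The $R_n$ asymptotic controls these uniformly for $k,j=o(n)$, allowing the same Chebyshev argument to go through; contributions from cycles of length close to $n$ are negligible since there are at most $n/k$ cycles of length $k$. Assembling these pieces delivers the claimed concentration in both models.
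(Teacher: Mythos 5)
Your argument for $F\sim S_n$ is essentially the same as the paper's: the uniform cycle-length observation, first moment plus Markov for the long-cycle part, and second moment plus Chebyshev for the cycle count. For $F\sim G_{n,2}$ the two proofs genuinely diverge. The paper reduces to $S_n$ in one line, observing that a random permutation has no cycles of length $1$ or $2$ with probability $\Omega(1)$ and treating the conditional law as a random $2$-regular graph, so that all the estimates transfer immediately. You instead compute directly in the $2$-regular model using the enumeration $R_n\sim n!\,e^{-3/4}/\sqrt{\pi n}$ and ratios $R_{n-k}/R_n$, which imports singularity analysis of the exponential generating function and is considerably heavier machinery for the same lemma.

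There is, however, a genuine gap in your $G_{n,2}$ treatment of part 2. You assert that ``the same Chebyshev argument goes through'' without actually computing $\mb{E}[C_k]$ in that model. If you do, using that a fixed $k$-set supports $(k-1)!/2$ undirected $k$-cycles (not $(k-1)!$), you get
\[
\mb{E}[C_k]\;=\;\binom{n}{k}\,\frac{(k-1)!}{2}\,\frac{R_{n-k}}{R_n}\;\sim\;\frac{1}{2k}\sqrt{\frac{n}{n-k}}\;\sim\;\frac{1}{2k}\qquad\text{for }k=o(n),
\]
so $\mb{E}[X_t]\sim\tfrac12\log t$ rather than $\log t$. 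This is the well-known fact that a uniformly random $2$-regular graph has $(\tfrac12+o(1))\log n$ cycles, whereas a random permutation (even conditioned on having no $1$- or $2$-cycles) has $(1+o(1))\log n$. The two laws are not the same: a $2$-regular graph with $c$ cycles corresponds to $2^{c}$ oriented permutations, so conditioning a uniform permutation on cycle lengths $\ge 3$ induces the distribution proportional to $2^{\#\mathrm{cycles}}$ on $2$-regular graphs, not the uniform one. Your route, carried out honestly, would therefore produce a different constant from the one the lemma states; as written you have simply not checked the key expectation, which is precisely where the subtlety lives. The paper's conditioning reduction is the cleaner path, but note that it silently identifies these two models, and your direct calculation is exactly the computation that would have exposed the factor of two.
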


\begin{proof}
It suffices to prove the statement when $F \sim S_n$ is a random permutation, 
as with probability $\Omega(1)$ such $F$ has no cycles of length $1$ or $2$,
and then it is conditionally a random $2$-regular graph.

We start with the observation that for any $i \in [n]$,
the length of the cycle in $F$ containing $i$ is uniformly distributed in $[n]$.
To see this, consider revealing the cycle starting at $i$ one vertex at a time.
noting that before we return to $i$ each new vertex is uniformly distributed 
over those not yet seen. Thus for any $k \in \{0,\dots,n-1\}$, the probability that the first return
to $i$ is after $k$ steps is $\tfrac{n-1}{n}  \cdots \tfrac{n-k}{n-k+1} \tfrac{1}{n-k} = \frac{1}{n}$. 

For each $i \in [n]$ let $X_i$ count cycles in $F$ of length $i$.
Then $F$ has $iX_i$ vertices in cycles of length $i$, so $\mb{E}[iX_i]=n \cdot 1/n = 1$, i.e.~$\mb{E}X_i=1/i$.
Let $Y$ count vertices in cycles of length at most $\eps n$.
Then $\mb{E}[Y] = \eps n$, so by Markov's inequality $\mb{P}(Y=n) \le 2\eps$,
which proves the first statement.

For the second statement, let $X$ count cycles of length at most $t$.
Then $\mb{E}[X] = \sum_{i=1}^t 1/i \sim \log t$. We will show $\var X \le \mb{E}X$,
which suffices to complete the proof by Chebyshev's inequality.

We write $X = \sum_{C\in \mathcal{C}}X_C$ 
as a sum over all potential cycles $C$ of length at most $t$
of the indicator variable $X_C$ that $C$ is a cycle in $F$.
Note that $X_C X_{C'} = 0$ for any two distinct cycles $C,C'$ with $C \cap C' \ne \es$.
On the other hand, if $C \cap C' = \es$ there are $(n-|C|-|C'|)!$ permutations
containing both $C$ and $C'$, so $\mb{E}[X_C X_{C'}]=(n-|C|-|C'|)!/n!$.
Thus for any $\ell+\ell' \le n$ we have
\[ \sum_{|C|=\ell} \sum_{|C'|=\ell'} \mb{E}[X_C X_{C'}]
= (\ell-1)! \binom{n}{\ell} (\ell'-1)! \binom{n-\ell}{\ell'} (n-\ell-\ell')!/n! = \frac{1}{\ell \ell'}.\]
We deduce
\begin{align*}
    \var(X ) 
     =\sum_{C,C'\in\mathcal{C}}E[X_CX_{C'}]-(E[X])^2
    \leq E[X]+\sum_{\ell,\ell'\leq t}\frac{1}{\ell \ell'}-(E[X])^2   = E[X]. &\qedhere
\end{align*}
\end{proof}

Now we analyse the lower bound construction for our minimum degree result.

\begin{proof}[Proof of \Cref{thm:lower bound minimum degree}]
Fix any $\eps>0$ and let $G$ be a complete bipartite graph with parts $A$ and $B$, where $|A|=(1-\eps)\sqrt{\tfrac12 n\log n}$ and $|B|=n-|A|$.
Our aim is to show that whp the induced subgraph \( F[B]=(G \cup F)[B] \) contains more than \( |A| \) connected components.
This will imply that \( G \cup F \) cannot have a Hamiltonian cycle, as by removing the vertices of \( A \) from a presumed Hamiltonian cycle, 
we observe that the cycle would induce at most \( |A| \) components on \( B \), which is impossible.

We start by considering cycles in $F$ that are completely contained in $B$.
We let $t = \sqrt{n}/\log n$ and note that for any given cycle $C$ in $F$ of length at most $t$
we have $\mb{P}(C \cap A \ne \es) \le t|A|/n = o(1)$. Combining this observation with  \Cref{lem:permutation properties} and Markov's inequality,
we can choose $K = (1/2-o(1))\log n$ so that whp $F$ has at least $K$ cycles of length at most $t$ completely contained in $B$.

Now we consider the collection $\mathcal{C}$ of cycles in $F$ that contain vertices in $A$.
Let $Z$ count all edges of cycles in $\mathcal{C}$ that are contained in $A$.
There are precisely $|A|-Z$ connected components in $B$ induced by cycles in $\mathcal{C}$,
so it suffices to show whp $Z < K$.
We have $\mb{E}[Z]=n\tfrac{|A|(|A|-1)}{n(n-1)} \sim (1-\eps)^2 \tfrac12 \log n$ and 
\[ \mb{E}[Z(Z-1)] = n\tfrac{|A|(|A|-1)(|A|-2)}{n(n-1)(n-2)} + n(n-3) \tfrac{|A|(|A|-1)(|A|-2)(|A|-3)}{n(n-1)(n-2)(n-3)}
= (1+o(1))(\mb{E}[Z])^2, \]
so by  Chebyshev's inequality whp $Z < K$, as required.
\end{proof}

\section{Some applications of concentration}

Here we state some concentration inequalities and deduce some lemmas that will be used later in the paper. 
We will often need the Chernoff bound for binomial and hypergeometric random variables (see e.g.~\cite[Theorems~2.1~and~2.10]{janson2011random}). 

\begin{thm}[Chernoff bounds]
Let $X$ be a binomial or hypergeometric random variable and $\delta>0$. Then
$\mb{P}[ |X-\mb{E}X| \ge \delta \mb{E}X ] \le 2\exp(-\delta^2\mb{E}X/(2+\delta))$.
\end{thm}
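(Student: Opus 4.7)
The plan is to prove both tail bounds via the classical exponential-moment (Chernoff) method and combine them by a union bound. I would start with the binomial case, writing $X=\sum_{i=1}^n X_i$ as a sum of independent Bernoulli$(p_i)$ summands with $\mu:=\mb{E}X=\sum p_i$.

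For the upper tail, fix $t>0$ and apply Markov's inequality to $e^{tX}$:
\[ \mb{P}(X\ge (1+\delta)\mu)\le e^{-t(1+\delta)\mu}\,\mb{E}[e^{tX}]. \]
By independence and the inequality $1+x\le e^x$ applied termwise, $\mb{E}[e^{tX}]=\prod_i(1+p_i(e^t-1))\le\exp(\mu(e^t-1))$. Optimising at $t=\log(1+\delta)$ yields the familiar bound $\bigl(e^\delta/(1+\delta)^{1+\delta}\bigr)^\mu$, and the elementary calculus inequality $(1+\delta)\log(1+\delta)-\delta\ge\delta^2/(2+\delta)$ (valid for all $\delta\ge 0$, with equality to second order at $\delta=0$) converts this into $\exp(-\delta^2\mu/(2+\delta))$. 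The lower tail is symmetric: taking $t<0$ and optimising at $t=\log(1-\delta)$, the same manipulations in fact give the stronger bound $\exp(-\delta^2\mu/2)$, which is subsumed by the stated inequality. Adding the two tails produces the factor of $2$.

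The main obstacle is the hypergeometric case, where the indicator summands are no longer independent. The cleanest workaround is Hoeffding's theorem that for any convex $\phi$, if $Y$ and $Y'$ denote the sum of a $k$-element sample drawn from a finite population without and with replacement respectively, then $\mb{E}\phi(Y)\le\mb{E}\phi(Y')$. Applied to $\phi(x)=e^{tx}$ this dominates the hypergeometric MGF by a binomial MGF with the same mean, reducing everything to the independent case already handled. An equivalent route is to observe that the indicators obtained by sampling without replacement are negatively associated and therefore satisfy the same Chernoff-type bounds as their independent counterparts. Either way, the binomial analysis transfers verbatim, and following the convention of the paper the full details are deferred to the citation of \cite{janson2011random}.
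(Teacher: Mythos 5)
The paper does not prove this statement itself; it simply cites \cite[Theorems~2.1~and~2.10]{janson2011random}. Your argument correctly reconstructs the standard proof found in that reference --- the exponential moment method for the binomial case (with the elementary inequalities $(1+\delta)\log(1+\delta)-\delta\ge\delta^2/(2+\delta)$ and $(1-\delta)\log(1-\delta)+\delta\ge\delta^2/2$), combined with Hoeffding's comparison of sampling without and with replacement for convex test functions to transfer the bound to the hypergeometric case --- so it is essentially the same approach as the paper's cited source.
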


For future reference we note an easy consequence on the minimum degree of random induced subgraphs.

\begin{lem}\label{lem:mindegree}
Let $G$ be an $n$-vertex graph with $\delta(G) = d \geq 20\log n$.
Let $X$ be a random subset of $V(G)$ of size $r$, where $n/4\geq r\geq 20 \tfrac{n}{d} \log n$.  
Then $\mb{P}[\ \delta(G-X) \ge (1-2r/n) d\ ] \ge 1-o(n^{-5})$.
\end{lem}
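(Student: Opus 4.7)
The plan is to use the stated Chernoff bound vertex-by-vertex and then take a union bound. Fix any $v \in V(G)$ and let $d_v = d_G(v) \ge d$. Consider $Y_v := |N_G(v) \cap X|$; since $X$ is a uniformly random $r$-subset of $V(G)$, $Y_v$ is hypergeometric with mean $\mu_v := d_v r/n$. If $v \notin X$ then the degree of $v$ in $G-X$ equals $d_v - Y_v$, so the failure event at $v$ is $Y_v > d_v - (1-2r/n)d$.

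Writing $d_v = d + \alpha$ with $\alpha \ge 0$, a short computation gives
\[
d_v - (1 - 2r/n)d - \mu_v = \alpha(1 - r/n) + rd/n =: t_v,
\]
so the failure event is $Y_v - \mu_v > t_v$. Crucially $t_v \ge rd/n \ge 20 \log n$ by the hypothesis $r \ge 20(n/d)\log n$, and also $t_v - \mu_v = \alpha(1 - 2r/n) \ge 0$ since $r \le n/4$, i.e.\ $t_v \ge \mu_v$.

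Now apply the stated Chernoff bound with $\delta = t_v/\mu_v \ge 1$: the exponent is
\[
\frac{\delta^2 \mu_v}{2 + \delta} \;=\; \frac{t_v^2}{2\mu_v + t_v} \;\ge\; \frac{t_v^2}{3 t_v} \;=\; \frac{t_v}{3} \;\ge\; \frac{20 \log n}{3}.
\]
Hence the failure probability at $v$ is at most $2 n^{-20/3}$. Taking a union bound over the at most $n$ choices of $v$ gives total failure probability at most $2 n^{-17/3} = o(n^{-5})$, as required.

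No step is a real obstacle; the only thing to be slightly careful about is checking that the relevant additive deviation $t_v$ dominates the mean $\mu_v$ (so that one lands in the subexponential tail regime of Chernoff), which is where the assumption $r \le n/4$ is used.
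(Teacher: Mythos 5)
Your proof is correct and takes essentially the same approach as the paper's: Chernoff for the hypergeometric count $|N_G(v)\cap X|$ plus a union bound over $v$. The paper's version is slightly more economical (it just bounds $\mb{P}(d(v,X)>2d_G(v)r/n)\le 2e^{-\mb{E}[d(v,X)]/3}$ with $\delta=1$ and notes this already implies the conclusion), whereas you compute the exact deviation threshold $t_v$; both give the same exponent up to constants and the same final bound.
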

\begin{proof}
For each vertex $v\in V(G)$, the number $d(v,X)$  of neighbours of $v$ in $X$ is hypergeometric 
with $\mb{E}[d(v,X)] = d_G(v) \cdot r/n \ge 20n\log n$.
By Chernoff bounds, we have $\mb{P}(d(v,X) > 2d_G(v) r/n) \le 2e^{-\mb{E}[d(v,X)]/3} = o(n^{-6})$.
The lemma follows by a union bound over $v$.
\end{proof}

We also need McDiarmid's bounded differences inequality,
which follows from Azuma's martingale inequality 
(see e.g.~\cite[Corollary~2.27~and~Remark~2.28]{janson2011random}). 
Suppose $f:S \to \mb{R}$ where $S = \prod_{i=1}^n S_i$
and $b = (b_1,\dots,b_n) \in \mb{R}^n$.
We say that $f$ is \emph{$b$-Lipschitz} if for any 
$s,s' \in S$ that differ only in the $i$th coordinate
we have $|f(s)-f(s')| \le b_i$. 
We also say that $f$ is \emph{$v$-varying} 
where $v=\sum_{i=1}^n b_i^2/4$.

\begin{thm}[Bounded differences inequality]\label{thm:azuma}
Suppose $Z = (Z_1,\dots,Z_n)$ is a sequence 
of independent random variables.
Let $X=f(Z)$, where $f$ is $v$-varying.
Then $\mb{P}(|X-\mb{E}X|>t) \le 2e^{-t^2/2v}$.
\end{thm}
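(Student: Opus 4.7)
The plan is to establish this via the standard Doob martingale plus Hoeffding's lemma recipe. I would introduce the filtration $\mathcal{F}_i = \sigma(Z_1,\dots,Z_i)$ and the Doob martingale $M_i = \mb{E}[X \mid \mathcal{F}_i]$, which interpolates between $M_0 = \mb{E}X$ and $M_n = X$. The theorem then reduces to a sub-Gaussian tail bound on the telescoping sum $X - \mb{E}X = \sum_{i=1}^n D_i$, where $D_i := M_i - M_{i-1}$.

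The key step — and the only place where the hypotheses enter — is to show that, conditional on $\mathcal{F}_{i-1}$, the difference $D_i$ takes values in an interval of length at most $b_i$. This uses independence of the $Z_j$ crucially: since $Z_i$ is independent of $\mathcal{F}_{i-1}$, one can write $\mb{E}[X \mid \mathcal{F}_i]$ as an integral over the remaining coordinates with $Z_i$ fixed, so the $b$-Lipschitz hypothesis gives
\[
\sup_{z_i} \mb{E}[X \mid \mathcal{F}_{i-1}, Z_i = z_i] \;-\; \inf_{z_i} \mb{E}[X \mid \mathcal{F}_{i-1}, Z_i = z_i] \;\le\; b_i.
\]
Since $M_{i-1}$ is the average of the inner expression over $Z_i$, this forces $D_i$ into an interval of length at most $b_i$.

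With this conditional range bound in hand, I would apply Hoeffding's lemma conditionally on $\mathcal{F}_{i-1}$ (valid because $D_i$ has conditional mean zero and bounded conditional range), obtaining $\mb{E}[e^{\lambda D_i} \mid \mathcal{F}_{i-1}] \le \exp(\lambda^2 b_i^2 / 8)$ for any $\lambda \in \mb{R}$. Iterating via the tower property telescopes to
\[
\mb{E}[\exp(\lambda(X - \mb{E}X))] \;\le\; \exp\!\bigl(\lambda^2 \textstyle\sum_i b_i^2 / 8\bigr) \;=\; \exp(\lambda^2 v / 2).
\]
The standard Chernoff trick — Markov's inequality applied to $e^{\lambda(X-\mb{E}X)}$ and optimised over $\lambda = t/v$ — yields the one-sided tail $\mb{P}(X - \mb{E}X > t) \le e^{-t^2/(2v)}$, and the same argument applied to $-f$ combines with a union bound to give the claimed two-sided inequality.

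The main obstacle is the conditional range estimate, which requires carefully separating the deterministic Lipschitz hypothesis from the probabilistic manipulation of conditional expectations; without independence of the $Z_j$ the sup/inf argument collapses. The sharp constant (a factor-of-four improvement over a naive Azuma application with $|D_i| \le b_i$) comes entirely from invoking Hoeffding's lemma for a variable of bounded range rather than bounding by the range itself.
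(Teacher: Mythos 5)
The paper does not prove this statement; it simply cites it as McDiarmid's bounded differences inequality, deducible from Azuma's martingale inequality (referring to Janson--\L uczak--Ruci\'nski, Corollary 2.27 and Remark 2.28). Your Doob-martingale plus Hoeffding's-lemma derivation is precisely that standard proof and is correct, including the key point that independence lets you bound the \emph{conditional range} of each increment $D_i$ by $b_i$ (rather than the cruder $|D_i|\le b_i$), which is exactly what yields the exponent $t^2/(2v)$ with $v=\sum_i b_i^2/4$.
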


We deduce the following lemma on expansion between random sets.

\begin{lem}\label{lem:expansion}
    Let $G$ be an $n$-vertex graph with $\delta(G)=d$ and $\Delta(G)=O(d)$.
    Let $S$ and $T$ be uniformly random disjoint sets of sizes $s$ and $t$,
    where $s=\omega(\log n)$ and $t = \omega(\tfrac{n}{d}\log n)$.
    Let $Y$ count vertices in $T$ that have a neighbour in $S$.
    Suppose $d':=\min\{d,n/s\} = \omega(\log n)$.
    Then $\mb{P}[Y < \tfrac{d'st}{16n}] < e^{-\omega(\log n)}$.
\end{lem}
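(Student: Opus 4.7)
The plan is to condition on $S$ and observe that $Y$ becomes hypergeometric with parameter depending on $|N_G(S)|$, then apply Chernoff's bound, having first shown that $|N_G(S)|$ is large with very high probability.

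I would first establish $\mb{E}[Y]\ge d'st/(2n)$: conditional on $v\in T$, the set $S$ is a uniformly random $s$-subset of $V\sm\{v\}$, so
\[
\P[N(v)\cap S\ne\es\mid v\in T]\ge 1-\bigl(1-\tfrac{d}{n-1}\bigr)^s\ge \tfrac{d's}{2n},
\]
where the final step uses a case split on whether $ds/n\le 1$. Next I would condition on $S$: the set $T$ is then a uniformly random $t$-subset of $V\sm S$, so $Y=|T\cap N_G(S)|$ is hypergeometric with mean $|N_G(S)|\cdot t/(n-s)$. Chernoff's bound for hypergeometric variables gives
\[
\P\bigl[Y<\tfrac12\mb{E}[Y\mid S]\bigm| S\bigr]\le \exp\bigl(-\Omega(\mb{E}[Y\mid S])\bigr).
\]
A quick check from the hypotheses shows $d'st/n=\omega(\log n)$ in both regimes $d'=d$ and $d'=n/s$, so on the good event $\{|N_G(S)|\ge d's/8\}$ we have $\mb{E}[Y\mid S]\ge d'st/(8n)$, and the conditional failure probability is $e^{-\omega(\log n)}$.

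The remaining task is to show $\P[|N_G(S)|<d's/8]\le e^{-\omega(\log n)}$. An analogous expectation calculation gives $\mb{E}|N_G(S)|\ge d's/4$. For the concentration, I would apply a monotone concentration inequality (such as Talagrand's inequality) to $|\Gamma_G(S)|$: it is monotone increasing in $S$, and the event $\{|\Gamma_G(S)|\ge k\}$ is certified by $k$ elements of $S$; combined with $|N_G(S)|\ge|\Gamma_G(S)|-|S|$ and the lower bound $d'\ge \omega(\log n)\gg 1$, this yields the bound. The hard part is this last concentration step, since the Lipschitz constant $O(\Delta)=O(d)$ may be large relative to the target $d's$ in some regimes. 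The likely resolution is a case analysis on the scale of $ds/n$: when $ds/n$ is large, a direct union bound using $\P[N(u)\cap S=\es]\le e^{-ds/n}$ already suffices; when $ds/n$ is moderate, the monotone concentration above gives the needed tail bound, using $s=\omega(\log n)$ to drive the exponent.
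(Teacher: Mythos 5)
The first half of your argument — conditioning on $S$, recognising $Y=|T\cap N_G(S)|$ as hypergeometric, and applying Chernoff once $|N_G(S)|$ is bounded below — matches the paper. The gap is in the concentration of $|\Gamma_G(S)|$, and it is a genuine one.

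Your Talagrand bound as you set it up does not reach the required exponent. With Lipschitz constant $b=\Delta(G)=O(d)$, certifiability $r=1$ (one element of $S$ per vertex of $\Gamma_G(S)$), median $M=\Theta(sd')$ and deviation $t=\Theta(sd')$, \Cref{lem:tal} gives exponent $\Theta\bigl(t^2/(rb^2(M+t))\bigr)=\Theta(sd'/d^2)$, which is not $\omega(\log n)$ in general. Your proposed case split on $ds/n$ does not rescue this: take $s=d=\log^2 n$, so $d'=\min\{d,n/s\}=\log^2 n$; then $ds/n=o(1)$ (the union-bound branch is unavailable) while $sd'/d^2=1$, a constant. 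Even if you first reduce the maximum degree to $O(d')$, the $(M+t)$ factor in the denominator of \Cref{lem:tal} only leaves exponent $\Theta(s/d')$, still $\Theta(1)$ in this example; Talagrand overcounts here precisely because $|\Gamma_G(S)|$ is typically much larger than the number $s$ of coordinates it actually depends on.

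The paper combines two ideas that are absent from your proposal. First, it \emph{sparsifies}: it passes to a random subgraph $G'\subseteq G$ in which each edge is retained with probability $\min\{2d'/d,1\}$, so that whp $\delta(G')\ge d'$ and $\Delta(G')=O(d')$; this caps the Lipschitz constant at $O(d')$ no matter how large $d$ is. Second, it replaces the uniform $s$-set $S$ by $s$ i.i.d.\ uniform draws $S'$ (coupled via monotonicity so that $\mb{P}[|\Gamma_{G'}(S)|>sd'/4]\ge\mb{P}[|\Gamma_{G'}(S')|>sd'/4]$) and applies the bounded-differences inequality (\Cref{thm:azuma}) on this $s$-coordinate product space: $|\Gamma_{G'}(S')|$ is $\Delta(G')$-Lipschitz, hence $O(s(d')^2)$-varying, and deviation $t=\Theta(sd')$ gives exponent $\Theta\bigl(t^2/(s(d')^2)\bigr)=\Theta(s)=\omega(\log n)$ in every regime. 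Both ingredients are needed: without sparsification the variance is $O(sd^2)$ and the exponent can be $o(1)$; without the $s$-coordinate product space the permutation Talagrand bound only yields $\Theta(s/d')$.
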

\begin{proof}
Let $\Gamma(S)$ be the set of all vertices that have a neighbour in $S$.
Let $E$ be the event that $|\Gamma(S)| > sd'/4$.
We claim that $\mb{P}[E] \ge 1-e^{-\omega(\log n)}$.
This will suffice to prove the lemma,
as $\mb{E}[Y \mid E] \ge t \tfrac{|\Gamma(S)|-s}{n-s} \ge \tfrac{d'st}{8n}$ for $d' \ge 8$,
so by Chernoff bounds $\mb{P}[Y < \tfrac{d'st}{16n} \mid E] \le e^{-\Theta(sd't/n)}\leq e^{-\omega(\log n)}$.

To prove the claim, we let $G'$ be a random subgraph of $G$ where each edge 
is retained with probability $\min\{2d'/d,1\}$.
By Chernoff bounds, $\delta(G') \ge d'$ and $\Delta(G')=O(d')$ with probability $1-e^{-\omega(\log n)}$.
We consider a random subset $S'$ in $V(G')$ of size at most $s$
obtained from $s$ independent samples of a uniformly random vertex of $G'$ (allowing repetitions). 
We let $E'$ be the event that $|\Gamma_{G'}(S')| > sd'/4$ and note that $\mb{P}[E] \ge \mb{P}[E']$.
For each $v \in V(G')$, we have 
\[ \mb{P}(v \notin \Gamma_{G'}(S')) \le (1-d'/n)^s \le 1- sd'/n + \tbinom{s}{2}(d'/n)^2 \le 1 - \tfrac{sd'}{2n}.\]
Thus $\mb{E}[|\Gamma_{G'}(S')|] \ge sd'/2$.
Also, $|\Gamma_{G'}(S')|$ is a $\Delta(G')$-Lipschitz function on the probability space of $S'$, so is $O(sd'{}^2)$-varying,
and so by Azuma's inequality $\mb{P}(|\Gamma(S')| \le sd'/4) \le e^{-\Omega(s)} \le  e^{-\omega(\log n)}$.
\end{proof}

Our final concentration inequality is a variation on Talagrand's convex distance inequality,
adapted to the setting of random permutations by McDiarmid;
we state a simplified form of \cite[Theorem 4.1]{mcdiarmid2002concentration}.

\begin{lem}[Talagrand's inequality for permutations] \label{lem:tal}
Let $\pi \sim S_n$ be a uniformly random permutation and $X=X(\pi)$ be a non-negative random variable.
Suppose that $X$ is $b$-Lipschitz, meaning that $|X(\pi)-X(\pi')| \le bd(\pi,\pi')$, where $d(\pi,\pi')$ denotes Hamming distance.
Suppose also that $X$ is $r$-certifiable, meaning that if $X(\pi)=s$ then there is a set $S$ of $rs$ coordinates
such that $X(\pi') \ge s$ for any $\pi'$ agreeing with $\pi$ on $S$. Let $M$ be a median of $X$ and $t>0$.
Then $\mb{P}[ |X-M| \ge t ] \le 6\exp \left( -\tfrac{t^2}{16rb^2 (M+t)} \right)$.
\end{lem}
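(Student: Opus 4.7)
The author presumably will just cite McDiarmid's \cite[Theorem~4.1]{mcdiarmid2002concentration}, of which this is a simplified form. To outline nonetheless how such an inequality can be established from scratch, the standard route is to reduce the permutation setting to Talagrand's classical convex distance inequality on product probability spaces.

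First, I would represent $\pi \sim S_n$ via the Fisher--Yates shuffle: take independent random variables $Z_1,\dots,Z_{n-1}$ with $Z_i$ uniform on $\{i,\dots,n\}$, start from the identity permutation, and at step $i$ swap the entries at positions $i$ and $Z_i$. This expresses $\pi$ as a deterministic function on the product probability space $\prod_{i=1}^{n-1} \{i,\dots,n\}$, to which Talagrand's classical convex distance inequality applies directly.

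Next, I would lift the hypotheses on $X$ to this product representation. For Lipschitzness, I would track the set $D_j$ of positions where the two partial permutations arising from $(Z_i)$ and $(Z_i')$ disagree after step $j$: a single change $Z_i \ne Z_i'$ creates $|D_i|\le 3$, and each subsequent swap at $(j+1,Z_{j+1})$ either leaves $D_j$ intact (neither endpoint in $D_j$), merely translates one of its elements (exactly one endpoint in $D_j$), or preserves both existing differences (both endpoints in $D_j$), so $|D_{n-1}| \le 3$. Hence $X$ is $3b$-Lipschitz on the product space. A parallel bookkeeping argument lifts the $r$-certifiability hypothesis to the product coordinates with at most a constant-factor loss. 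Applying Talagrand's convex distance inequality on the product space and performing the standard conversion from convex distance to the deviation-from-median form (which is where the factor $16rb^2(M+t)$ in the exponent appears) then yields the stated bound.

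The main obstacle is the certifiability transfer: fixing $\pi$ at a prescribed set of positions does not cleanly correspond to fixing a comparable set of Fisher--Yates coordinates, because each $Z_j$ influences several permutation positions through subsequent swaps. Careful combinatorial accounting is needed to argue that a set of $O(rs)$ product coordinates indeed suffices to certify the required lower bound on $X$, and this is the step that forces the setting to lose the cleanliness of the product-space case and introduces the particular form of the constant in the final exponent.
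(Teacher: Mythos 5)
You are right that the paper offers no proof: it simply states the lemma as ``a simplified form of [McDiarmid, Theorem 4.1]'' and cites that result, so your opening sentence already matches the paper exactly. The rest of your proposal, however, sketches a proof that is not how McDiarmid's theorem is actually obtained, and the gap you flag in the certifiability transfer is genuine and, I believe, fatal to the Fisher--Yates route as sketched. Your Lipschitz bookkeeping (a single change to $Z_i$ propagates to at most three permutation positions, yielding a $3b$-Lipschitz function on the product space) is correct. But certifiability does not transfer with only a constant-factor loss: in the Fisher--Yates encoding, the value that lands at a given position $k$ has followed a trajectory through the array determined by a chain of swaps, and pinning down $\pi(k)$ can require fixing an unbounded number of the coordinates $Z_1,\dots,Z_k$ (one for each jump of that value), not $O(1)$ of them. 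Consequently a witness set of $rs$ permutation positions need not correspond to $O(rs)$ product coordinates, and the reduction to Talagrand's classical product-space inequality breaks down at precisely the step you identify as ``the main obstacle.'' This is why McDiarmid (following Talagrand's own treatment of the symmetric group) does not reduce to a product space at all: the proof establishes the convex-distance isoperimetric inequality directly on $S_n$, running Talagrand's induction over the group with the transposition/Hamming metric. So your guess about the paper's citation is correct, but the alternative proof sketch you offer would need a fundamentally different idea (or a different encoding of permutations) to close the certifiability gap.
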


We deduce a lemma on hitting two sets by consecutive pairs in a random permutation $\pi$.

\begin{lem} \label{lem:hitpairs}
Let $R,S,T$ be subsets of $[n]$ with $|R| = \Omega(n)$ and $|S||T|=\omega(n)$.
Let $\pi$ be a random permutation and $X=X_{RST}(\pi)$ count pairs $(s,t) \in S \times T$
of the form  $(\pi(i),\pi(i+1))$ or $(\pi(i+1),\pi(i))$ with $i \in R$, interpreting $n+1$ as $1$.
Then whp $X \ge (2-o(1)) |R||S||T|/n^2$.
\end{lem}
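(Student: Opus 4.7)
My plan is to compute $\mathbb{E}[X]$ directly and then apply Talagrand's inequality for permutations (\Cref{lem:tal}) to concentrate $X$ around its mean.

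First I would decompose $X = X^+ + X^-$, where $X^+ = \sum_{i \in R}\mathbf{1}[\pi(i) \in S,\ \pi(i+1) \in T]$ and $X^-$ is the analogous sum with the roles of $i$ and $i+1$ swapped. For each fixed $i$, the pair $(\pi(i),\pi(i+1))$ is a uniformly random ordered pair of distinct elements of $[n]$, so
\[
\mathbb{P}[\pi(i) \in S, \pi(i+1) \in T] = \frac{|S||T|-|S\cap T|}{n(n-1)}.
\]
Using $|S \cap T| \le \min(|S|,|T|) \le \sqrt{|S||T|}$ together with the hypothesis $|S||T| = \omega(n)\to\infty$, this probability is $(1-o(1))|S||T|/n^2$. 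Summing over $i \in R$ and adding the contribution of $X^-$ gives $\mathbb{E}X = (2-o(1))|R||S||T|/n^2$. Call this value $\mu$; note $\mu = \Omega(|S||T|/n) = \omega(1)$ since $|R|=\Omega(n)$.

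Next I would verify the two hypotheses of \Cref{lem:tal}. Each position $j$ of $\pi$ appears in only the four summands of $X$ indexed by $i \in \{j-1,j\}$ and the two orientations, so changing the value of $\pi$ at one position alters at most a constant number of indicators; hence $X$ is $b$-Lipschitz for an absolute constant $b$. Moreover, if $X(\pi)=k$ then specifying $\pi$ on the (at most) $2k$ positions $\{i,i+1\}$ that contribute to $X$ forces $X(\pi')\ge k$ for any $\pi'$ agreeing with $\pi$ there; so $X$ is $2$-certifiable. Letting $M$ be a median of $X$, Talagrand's inequality then yields
\[
\mathbb{P}[|X-M|\ge t] \le 6\exp\!\left(-\frac{t^2}{C(M+t)}\right)
\]
for an absolute constant $C$.

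Finally, a standard consequence of such a concentration estimate is that $|\mathbb{E}X-M|=O(\sqrt{M})$, so $M=(1+o(1))\mu$, in particular $M \to \infty$. Choosing, for instance, $t = M^{3/4}$, the right-hand side above is $6\exp(-\Omega(M^{1/2})) = o(1)$, so whp $X \ge M - t = (1-o(1))M = (2-o(1))|R||S||T|/n^2$. I do not anticipate a serious obstacle: the only subtlety is that $|S\cap T|$ could be comparable to $\sqrt{|S||T|}$, but the assumption $|S||T|=\omega(n)$ makes the ensuing loss a $(1-o(1))$ factor, and the Lipschitz/certifiability constants for $X$ are uniformly bounded by inspection of the summands.
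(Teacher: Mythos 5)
Your proposal is correct and follows essentially the same route as the paper: compute $\mathbb{E}X$ exactly, observe that $X$ is $O(1)$-Lipschitz and $2$-certifiable, and apply Talagrand's inequality for permutations (\Cref{lem:tal}) to pin $X$ near its median and hence near its mean. Your intermediate step $|\mathbb{E}X - M| = O(\sqrt{M})$ is a slightly more explicit (and arguably cleaner) way of carrying out the median-to-mean transfer that the paper handles via a contradiction with a chosen $\eps$.
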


\begin{proof}
For each $i \in R$ and  $(s,t) \in S \times T$ we have 
$\mb{P}( (\pi(i),\pi(i+1))=(s,t) ) = \mb{P}( (\pi(i+1),\pi(i))=(s,t) )  = \tfrac{1}{n(n-1)}$,
so $\mb{E}[X] = 2|R||S||T|/n(n-1) = \omega(1)$. 
For concentration, let $\eps=\eps(n)$ be $o(1)$ and $\omega(n/|S||T|)$.
Noting that $X$ is $4$-Lipschitz and $2$-certifiable, we have $M \ge (1-\eps)\mb{E}[X]$,
otherwise Talagrand's inequality with $t = \eps \cdot \mb{E}[X]$ would give
the contradiction $\mb{P}[X \le M] \le e^{-\Theta(\mb{E}[X])} = o(1) < 1/2$.
Applying Talagrand's inequality again gives
$\mb{P}[X \le (1-2\eps)\mb{E}[X]] \le e^{-\Theta(\mb{E}[X])} = o(1)$, as required.
\end{proof}

\section{Asymptotically optimal minimum degree}

In this section we show whp hamiltonicity of $G \cup F$ with $F\sim G_{n,2}$
under the asymptotically optimal minimum degree assumption on $G$,
namely  $\delta(G)=d=(1+\eps)(\sqrt{\tfrac{n}{2}\log n})$,
where $\eps>0$ is small and $n>n_0(\eps)$ is large.
We condition on the lengths of the cycles of $F$, 
fix a $2$-factor $F^*$ on $V(G)$ with precisely these cycle lengths,
and write $F = \pi(F^*)$ where $\pi \in S_n$ is a uniformly random permutation.
We employ a multiple round exposure of the randomness in $\pi$.
The proof strategy is to grow a path $P$ in which $o(n)$ vertices have been exposed
to find connecting edges in $G$ but where most of $P$ remains random, 
depending on the unrevealed randomness in $\pi$;
we use the unrevealed randomness to find more connecting edges. 
We grow $P$ by incorporating cycles of $F$, using the following two phase process:
\begin{enumerate}
\item We call a cycle in $F$ \emph{long} if it has length at least $L := \sqrt{n}\log^2 n$.
Suppose that the current path $P$ starts at $a$ and ends at $b$.
If some long cycle $C$ is not yet incorporated we expose $\sqrt{n} \log n$ consecutive vertices of $C$,
which is a negligible fraction of $C$ yet enough to whp discover a neighbour of $b$,
so that we can use $C$ to extend $P$. Thus by exposing $o(n)$ vertices, 
we can find edges in $G$ that can be used to 
connect up all long cycles in $F$  to form a path $P$ of length $n-o(n)$. 
\item Given the current path $P$, say starting at $a$ and ending at $b$,
to incorporate some short cycle $C$, we fix two consecutive vertices $a_C, b_C$ on $C$, 
then find an edge $xy$ of $P$ such that (i)  $xb$, $ya_C$ are edges of $G$,
and (ii) $x$ appears before $y$ on $P$.
Then we can extend $P$ to a path from $a$ to $b_C$ incorporating $C$,
replacing $b$ by $b_C$ and leaving $a$ unchanged (see Figure \ref{fig:one rotation}). 
When all short cycles have been incorporated, so that $P$ is now a Hamiltonian path from $a$ to some $b$,
we find $xy$ as above so that $xb$, $ya$ are edges of $G$, which gives a Hamiltonian cycle. 
\end{enumerate}
The delicate part of the argument is conserving enough randomness in $P$ 
to find the pairs $xy$ needed in Phase 2. For any short cycle $C$, 
by Talagrand's inequality (applied via \Cref{lem:hitpairs})
we have at least $\sim 2d^2/n \sim (1+\eps)^2 \log n$ ordered edges $xy$ on $P$ such that $xb$, $ya_C$ are edges of $G$.
About half of these should appear in the correct order on $P$, 
whereas we have about $\tfrac12 \log n$ short cycles by \Cref{lem:permutation properties},
so this suggests that we will have enough ordered edges $xy$
if we can control how many became unavailable due to previous short cycles.
We will find a set $M_C$ of at least $(1+\eps)\log n$ suitable ordered edges $xy$ for each $C$
such that $M = \bigcup_C M_C$ is a matching and the conditional distribution of $\pi$
has all $2^{|M|}$ possible orders on $P$ of each edge of $M$ being equally likely.
This will suffice, although some extra care in the argument is needed to handle \emph{twin} edges $xy$
for which both ordered edges $xy$ and $yx$ are in $M_C$.

\begin{figure}[ht]
    \centering
    \begin{tikzpicture}[scale=1.34,
        main node/.style={circle,draw,color=black,fill=black,inner sep=0pt,minimum width=3pt}]
        
        \tikzset{cross/.style={
            cross out, 
            draw=black, 
            fill=none, 
            minimum size=2*(#1-\pgflinewidth),
            inner sep=0pt,
            outer sep=0pt},
        cross/.default={2pt}}
        
        \tikzset{rectangle/.append style={
            draw=brown, 
            ultra thick, 
            fill=red!30}}
      
        \coordinate (A)  at (0,0);   
        \coordinate (B)  at (6,0);   
        \coordinate (C)  at (2,0);   
        \coordinate (C') at (2.3,0); 
        \coordinate (D)  at (3,-1.5);
        
        \coordinate (F)  at (3,-1);  
        
        \node[main node] (F') at ($(3,-1.5)+(60:0.5)$) [label=right:$b_C$] {};
        
        \draw[thick] (A) -- (B);
        
        \draw[thick] (C) to[out=90, in=90, looseness=1] (B);
        
        \draw[thick] (C') to[out=-90, in=90, looseness=1.5] (F);
        
        \draw[thick] (D) circle [radius=0.5];
        
        \fill (A) circle (2pt);  
        \fill (B) circle (2pt);  
        \fill (C) circle (2pt);  
        \fill (C') circle (2pt); 
        \fill (F) circle (2pt);  
        \fill (F') circle (2pt);
        \node[above]      at (A)  {$a$}; 
        \node[above right]at (B)  {$b$};
        \node[above right]at (C') {$y$};
        \node[below left] at (C)  {$x$};
        \node[above left] at (F)  {$a_C$};
        
        \draw[blue, line width=4pt, opacity = 0.25]
            (A)
            -- (C)
            to[out=90,in=90,looseness=1] (B)
            -- (C')
            to[out=-90,in=90,looseness=1.5] (F)
            arc [
                start angle=90,
                end angle=420,  
                radius=0.5,
            ];     
    \end{tikzpicture}
    \caption{Using a good pair to incorporate a short cycle $C$ in the current path $P$. }
    \label{fig:one rotation}
\end{figure}

\begin{proof}[Proof of \Cref{thm:min deg}]
Let $G$ be an $n$-vertex graph with $\delta(G)=d=(1+\eps)(\sqrt{n\log n/2})$,
where $\eps>0$ is small and $n>n_0(\eps)$ is  large.
Let $F\sim G_{n,2}$ be a random $2$-regular graph on the same vertex set as $G$. 
We want to show that whp $G\cup F$ is Hamiltonian. 

By \Cref{lem:permutation properties}, whp $F$ has $(1+o(1))\log n$ cycles,
of which $(\tfrac12+o(1)) \log n$ are short (of length $<L := \sqrt{n}\log^2 n$).
We condition on such a set of cycle lengths for $F$,
fix a $2$-factor $F^*$ on $V(G)$ with precisely these cycle lengths,
and write $F = \pi(F^*)$ where $\pi \in S_n$ is a uniformly random permutation.

As described before the proof, we will grow a path $P$ in which $o(n)$ vertices 
have been exposed to find connecting edges in $G$ but where most of $P$ remains random, 
depending on the unrevealed randomness in $\pi$. To start with, we fix any vertex of $F^*$, say $1$,
expose $b=\pi(1)$ and let $P$ be a random path ending at $b$ that follows the image under $\pi$
of the cycle in $F^*$ containing $1$. By exposing $b$, we mean that $b$ is fixed and $\pi$ is now
a uniformly random permutation conditioned on $\pi(1)=b$. 

At any point in the proof, 
we will have some exposed set $B = \{b_i: i \in I\} \sub V(G)$ where $I \sub V(G)$ with $|I|=o(n)$,
such that $B$ is a uniformly random set of size $|I|$, and the conditional distribution of $\pi$
given $B$ is uniformly random conditioned on $\pi(i)=b_i$ for all $i \in I$.
By Chernoff bounds for hypergeometric distributions, 
whp every vertex has $o(d)$ exposed neighbours.
We will always denote the current path by $P$ and its final vertex by $b$ 
(so $b$ will change each time we incorporate a cycle).
The first vertex of $P$, which we denote by $a$, will remain the same throughout the proof.

\medskip

\emph{Phase 1: incorporating long cycles.}

\medskip

Given the current path $P$ from $a$ to some $b$,
if some long cycle $C$ is not yet incorporated 
we expose consecutive $\sqrt{n} \log n$ vertices of $C$.
As $b$ had at least $d-o(d)$ unexposed neighbours,
the number of neighbours of $b$ exposed on $C$
is hypergeometric with mean 
at least $\sim d (\sqrt{n} \log n)/n = \Omega(\log^{3/2} n)$.
By Chernoff bounds, with probability $1-o(1/n)$ 
some neighbour of $b$ is thus exposed,
so $P$ can be extended to incorporate $C$.
We write $P_0$ for $P$ after Phase 1 and let $b_0 = b$ be the last vertex of $P_0$.
Then $P_0$ has length $n-o(n)$, as the total length of the short cycles is $o(n)$.

\medskip

\emph{Preparation for Phase 2: counting good edges.}

\medskip

We enumerate the short cycles as $C_1,\dots,C_k$ for some $k \sim \tfrac12 \log n$.
We fix and expose two consecutive vertices $a_i, b_i$ on $C_i$ for each $i \in [k]$.
We will incorporate the short cycles in order, so that after incorporating any $C_i$
the current path $P$, denoted by $P_i$, incorporates $C_1,\dots,C_i$ and ends at $b_i$.
We let $a_{k+1}=a$ be the start vertex of $P$.
For $i \in [k+1]$, we say that an ordered pair $xy$ is $i$-good 
if $x$ and $y$ are unexposed and $x b_{i-1}$ and $ya_i$ are edges of $G$.
If $xy$ is $i$-good then we will be able to use it to incorporate $C_i$
(or convert a Hamilton path to a Hamilton cycle if $i=k+1$) 
if (a) $x$ appears before $y$ on $P_{i-1}$, and 
(b) neither of $x,y$ was used to incorporate any $C_j$ with $j<i$.

For each $i \in [k+1]$, we let $M_i=M_i(\pi)$ be the set of $i$-good ordered edges $xy$ of $P_0$ 
such that $xy$ and possibly $yx$ are the only ordered pairs that are $i'$-good for some $i' \in [k+1]$ 
within the $4$ consecutive vertices $w,x,y,z$ of $P_0$ surrounding $xy$.
Thus the set $M$ of unordered pairs appearing in any $M_i$ is a matching contained in $P_0$.
For any $xy \in M$, transposing $x$ and $y$ in $\pi$ does not affect any $M_i$,
so the conditional distribution of $\pi$ given $M_1,\dots,M_k$ 
makes all orders on $P_0$ of the pairs in $M$ equally likely.

We claim that whp each $M_i$ contains all $i$-good edges.
To see this, we bound the number of discarded
$i$-good pairs by the number of $Y_i$ of $i$-good pairs $xy$, surrounded in $P_0$ by some $w$ and $z$,
such that $w$ or $z$ is an unexposed neighbour of $a_{i'}$ or $b_{i'}$ for some $i' \in [k+1]$.
There are $O(\log^2 n)$ choices for $(i,i')$, and $O(d^3)$ choices for $x,y$ and then one of $w$ or $z$,
having at most $O(n)$ choices for their position in $P_0$, 
each occurring with probability $O(n^{-3})$,
so $\sum_i \mb{E}[Y_i] = O(d^3 n^{-2} \log^2 n) = o(1)$. 
Thus whp all $Y_i=0$, which proves the claim.

We also claim that each $|M_i|>(1+\eps)\log n$.
To see this we apply Lemma \ref{lem:hitpairs}
with $R$ equal to the set of $j \in [n]$ such that $\pi(j-1),\pi(j),\pi(j+1)$ are all unexposed,
and $S,T$ respectively equal to the set of unexposed vertices in $N_G(b_{i-1})$, $N_G(a_i)$,
deducing that whp for each $i$
at least $(2-o(1)) |R||S||T|/n^2 \ge (1-o(1))(1+\eps)^2 \log n$ pairs are $i$-good.
Here we can take a union bound over $O(\log n)$ choices of $i$,
as by the proof of Lemma \ref{lem:hitpairs} the failure probability
for each $i$ decays exponentially in $\log n$.
Strictly speaking, we cannot apply Lemma \ref{lem:hitpairs} directly to $\pi$, 
as after Phase 1 we have exposed $o(n)$ vertices $B = \{b_i: i \in I\} \sub V(G)$
and the conditional distribution of $\pi$ given $B$ is uniformly random conditioned on $\pi(i)=b_i$ for all $i \in I$.
Thus we fix some $\pi_0$ with $\pi_0(i)=b_i$ for all $i \in I$, write $\pi = \sigma \pi_0$ where $\sigma$ is
a uniformly random permutation of $V(G) \sm B$, and apply Lemma \ref{lem:hitpairs} to $\sigma$.
The claim follows.

\medskip

\emph{Phase 2: incorporating short cycles.}

\medskip

We will incorporate the short cycles in order, as described above,
so that after incorporating any $C_i$ with $i \in [k]$
the current path $P$, denoted by $P_i$, incorporates $C_1,\dots,C_i$ and ends at $b_i$.
In step $k+1$ the same method will convert a Hamiltonian path to a Hamiltonian cycle.

We say that an unordered pair $\{x,y\}$ is an \emph{$i$-twin} 
if both ordered pairs $xy$ and $yx$ are $i$-good.
We let $T$ be a maximal collection of twins that can be assigned to distinct 
short cycles:~formally, let $T \sub M$ be maximal such that each $xy \in T$
is an $f(xy)$-twin for some injection $f:T \to [k+1]$.
For each step $i \in [k]$ of the form $f(xy)$ for some $xy \in T$,
we can incorporate $C_i$ using the $i$-good pair $xy$ or $yx$
according to whichever order is prescribed by $P_{i-1}$.
We can assume $|T| \le k \sim \tfrac12 \log n$,
otherwise all steps can be completed using twins.

For each $i \in [k+1] \sm f(T)$, by maximality of $T$
there are no $i$-twins disjoint from $T$. For such $i$, 
we maintain a subset $M'_i \sub M_i$ 
of ordered pairs that have not been discarded,
where initially we discard all pairs that are in $T$ (ignoring order).
We greedily expose the order of pairs in $M'_i$
until we find one with the correct order is prescribed by $P_{i-1}$;
we also discard all exposed pairs from all $M'_{i'}$.

To complete the proof, it suffices to show that whp no $M'_i$ becomes empty.
Initially, after discarding $T$, each $|M'_i| \ge N := (1+\eps)\log n - 2|T|$,
so it suffices to show that whp we expose fewer than $N$ pairs.
Suppose that we expose $N$ pairs and
let $X$ count successful exposed pairs, meaning that they were in the correct order.
Each exposed pair has probability $1/2$ of success independently
of all others, so $X$ is binomial with mean $N/2$.
To complete the process we require $k+1 - |T|$ successes,
so $X \le k+1 - |T| = (\tfrac12 + o(1))\log n - |T|$.
By Chernoff bounds, this event has probability $o(1)$, as required. 
\end{proof}

\section{Approximately regular graphs}

Here we show whp hamiltonicity of $G \cup F$ with $F \sim G_{n,2}$
when $G$ is an $n$-vertex graph with minimum degree $\delta(G)=\omega(\log^3n)$ 
and maximum degree $\Delta(G)=O(\delta(G))$.
As before, we condition on the lengths of the cycles of $F$, 
fix a $2$-factor $F^*$ on $V(G)$ with precisely these cycle lengths,
and write $F = \pi(F^*)$ where $\pi \in S_n$ is a uniformly random permutation.

By \Cref{lem:permutation properties} we can assume that $F^*$ has $O(\log n)$ cycles
and at least one cycle $C_1$ of length $\Omega(n)$, which will play the role of the path $P_0$
in the proof of \Cref{thm:min deg}:~we will expose vertices of $\pi(C_1)$ to discover edges of $G$
that can be used to incorporate the other cycles of $F$. However, the degrees in $G$ may be only
polylogarithmic, which is much sparser than our previous setting, so we cannot hope to find the two edge
connections used before. Instead we work with the following definition from \cite{draganic2022pancyclicity},
illustrated in Figure \ref{fig:special set}: given a graph on an ordered vertex set,
we say that a sequence of vertices $v_1 < \ldots < v_{\ell}$ is \emph{special} 
if $(v_{i},v_{i+1}^+)$ is an edge for all $i\in [\ell-1]$,
where for any $v$ we write $v^-$ or $v^+$ for the predecessor or successor of $v$ in the vertex order.
If also $v_1 v_1^+ \dots v_\ell^- v_\ell$ is a path $P$ then we can use these edges
to form a \emph{special path} from $v_\ell^-$ to $v_\ell$ that uses all vertices of $P$;
this is illustrated in the $I_1$ part of \Cref{fig:rotation}.

\begin{figure}[ht]
    \centering
    \begin{tikzpicture}[scale=1.34,main node/.style={circle,draw,color=black,fill=black,inner sep=0pt,minimum width=3pt}]
        \tikzset{cross/.style={cross out, draw=black, fill=none, minimum size=2*(#1-\pgflinewidth), inner sep=0pt, outer sep=0pt}, cross/.default={2pt}}
	\tikzset{rectangle/.append style={draw=brown, ultra thick, fill=red!30}}

	    \foreach \i in {1,...,47}
	    {
	        \node[main node, scale=0.4] (aux) at (\i*0.2-0.6,0){};
	    }
	    \node[rectangle, color=red, scale=0.4] (a) at (0,0) [label=below:$v_1$]{};
	    \node[rectangle, color=red, scale=0.4] (a1) at (2,0)[label=below:$v_2$]{};
	    \node[rectangle, color=red, scale=0.4] (a2) at (3,0)[label=below:$v_3$]{};
	    \node[rectangle, color=red, scale=0.4] (a3) at (5.4,0)[label=below:$v_4$]{};
	     \node[rectangle, color=red, scale=0.4] (a4) at (8,0)[label=below:$v_5$]{};
	    \node[main node] (a4) at (8.2,0){};
	    
	    \node[main node] (b1) at (2.2,0){};
	    \node[main node] (b2) at (3.2,0){};
	    \node[main node] (b3) at (5.6,0){};
	    
	    \draw[line width= 1 pt] (a) to [bend left=60](b1);
	    \draw[line width= 1 pt] (a1) to [bend left=60](b2);
	    \draw[line width= 1 pt] (a2) to [bend left=60](b3);
	    \draw[line width= 1 pt] (a3) to [bend left=60](a4);


    \end{tikzpicture}
    \caption{A sequence $v_1 < \ldots < v_5$ of special vertices.}
    \label{fig:special set}
\end{figure}

The following lemma will be used to incorporate cycles into the path;
the intended use (see \Cref{fig:rotation}) is that $x$ is the start vertex,  
$v_1 \dots v_\ell$ is some unexposed interval,
and $y$ is a vertex of the cycle $C$ to be incorporated,
after which the new path will start at a vertex adjacent to $y$ on $C$. 

\begin{lem} \label{lem:special}
Let $G$ be an $n$-vertex graph with minimum degree $\delta(G)=d=\omega(\log^3n)$ 
and maximum degree $\Delta(G)=O(d)$. 
Fix any $\{x,y\} \sub V(G)$ and $k = \Theta(n/\log n)$.
Consider an ordered subgraph $G'$ on
uniformly random distinct vertices $v_1 < \dots < v_k$ in $V(G) \sm \{x,y\}$.
Then with probability $1-o(n^{-3})$ there is a special vertex sequence
$u_1 < \dots < u_\ell$ in $G'$ with $\ell < \log n$ such that
$xu_1^+$ and $yu_\ell^-$ are edges of $G$.
\end{lem}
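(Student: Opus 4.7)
My plan is to reformulate the special-sequence search as reachability in an auxiliary directed graph $D$ on position set $[k]$, defined by $a \to b$ iff $a < b$ and $v_a v_{b+1} \in E(G)$. Setting $A_0 = \{a : xv_{a+1} \in E(G)\}$ and $B_0 = \{b : yv_{b-1} \in E(G)\}$, a special sequence $u_1 < \cdots < u_\ell$ with $xu_1^+$ and $yu_\ell^-$ both edges of $G$ corresponds exactly to a directed walk $a_1 \to \cdots \to a_\ell$ in $D$ starting in $A_0$ and ending in $B_0$. So it suffices to show, with probability $1 - o(n^{-3})$, that the BFS distance from $A_0$ to $B_0$ in $D$ is at most $\log n - 2$.

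I would execute the BFS via multi-round exposure of the random positions. Partition $[k]$ into $T = \Theta(\log n / \log \log n)$ consecutive batches $I_1, \dots, I_T$ of equal size $k/T = \Theta(n/(\log n \log \log n))$, and reveal $\{v_i : i \in I_t\}$ in round $t$. Define the BFS reachable set after round $t$ by $L_1 = A_0 \cap I_1$ and
\[ L_t = L_{t-1} \cup \{b \in I_t : v_{b+1} \in N_G(\{v_a : a \in L_{t-1}\})\}, \]
restricting throughout to \emph{interior} positions of each batch (those whose neighbours $b \pm 1$ also fall in the same batch); the required monotonicity $a < b$ is automatic. By Chernoff, $|L_1|$ is hypergeometric with mean $\Theta(d/(T \log n)) = \omega(\log^2 n)$, so $|L_1| = \omega(\log n)$ with failure probability $o(n^{-4})$. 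In each subsequent round, \Cref{lem:expansion} applied with $\{v_a : a \in L_{t-1}\}$ as source and the freshly revealed $\{v_b : b \in I_t\}$ as target gives $|L_t \setminus L_{t-1}| \geq \Omega(d|L_{t-1}||I_t|/n) = |L_{t-1}| \cdot \omega(\log n)$ with failure probability $o(n^{-4})$; a union bound over $T$ rounds saturates $|L_T|$ to $\Omega(k)$.

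To ensure the walk lands in $B_0$, I would reserve the last batch $I_T$ for a joint check: in round $T$, seek $b \in I_T$ (interior) such that both $v_{b+1}$ is a neighbour of some $v_a$ with $a \in L_{T-1}$ (BFS extension) and $v_{b-1} \in N_G(y)$ (landing in $B_0$). BFS near-saturation by round $T-1$ makes the first condition hold for an $\Omega(1)$ fraction of $b \in I_T$, while $|\{b \in I_T : v_{b-1} \in N_G(y)\}| = \omega(\log^2 n)$ by Chernoff; a joint concentration argument (Chernoff or \Cref{lem:hitpairs}) then yields a nonempty intersection with probability $1 - o(n^{-3})$. The resulting walk has length $\leq T - 1$, so the special sequence has length $\ell \leq T = o(\log n)$, well within $\ell < \log n$.

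The main obstacle I expect is that $\{v_a : a \in L_{t-1}\}$ is not exactly a uniformly random subset — it is determined by the BFS algorithm — so applying \Cref{lem:expansion} requires justifying that it retains enough random structure to drive the expansion. This can be handled either by adapting the proof of \Cref{lem:expansion} using the permutation symmetry of unrevealed positions, or by a small re-randomisation step at each round. Boundary positions at batch edges introduce a lower-order $O(1)$-per-batch correction that is easily absorbed.
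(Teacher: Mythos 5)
Your BFS-via-multi-round-exposure strategy is conceptually the right idea and close in spirit to the paper's proof, which also builds up the special sequence by repeated applications of \Cref{lem:expansion} over $\Theta(\log n)$ disjoint intervals. But the obstacle you flag at the end and then wave away is a genuine gap, not a technicality: \Cref{lem:expansion} requires the \emph{source} set to be a uniformly random subset of the unexposed vertices, whereas your source $\{v_a : a\in L_{t-1}\}$ is fully revealed and heavily conditioned on the BFS history (every such $v_a$ sits at a position whose successor was found to lie in the earlier frontier, and you expose the entire batch $I_{t-1}$ at once, so the frontier values themselves carry this conditioning). With only a minimum degree assumption one cannot replace the randomness with a deterministic expansion bound (think of $G$ a disjoint union of cliques of size $d+1$: an adversarially placed set of size $s$ can have $|\Gamma(S)|=O(d)$ rather than $\Omega(sd)$), so the randomness of the source set is doing real work in \Cref{lem:expansion} and must be justified. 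Neither of the two patches you suggest is carried out, and a "small re-randomisation step" does not obviously restore uniformity of a set you have already revealed.

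The paper's resolution is precise and is the ingredient your argument is missing. Within each interval $J_i$ it exposes \emph{only the even positions} first; this determines the pre-frontier $S_i$ (values at even positions adjacent to the previous source). It then exposes the odd positions of $J_i$: crucially, the predecessor set $S_i^-:=\{v_j: v_{j+1}\in S_i\}$ lives at odd positions whose values were still fresh when $S_i$ was fixed, so conditional on the history $S_i^-$ really \emph{is} a uniformly random subset of the unexposed vertices, and it is $S_i^-$ --- not $S_i$, not the already-conditioned frontier values --- that serves as the source in the next application of \Cref{lem:expansion} (with target $T_i$ at even positions of $J_{i+1}$). This even/odd staging is exactly the "permutation symmetry of unrevealed positions" you gesture at, made rigorous. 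It also silently fixes the off-by-one in your invocation of \Cref{lem:expansion} (you need to count $b$ with $v_{b+1}\in\Gamma(\text{source})$, not $v_b$; with sources at odd positions and targets at even positions the shift is built in), and it means you never have to argue that an already-exposed frontier retains any randomness at all.
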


\begin{proof}
We start by fixing $\log n$ disjoint intervals $J_1,\dots,J_{\log n}$ in $[k]$
with each $|J_i| = \Theta(n/\log^2 n)$. We will construct $P$ in steps,
where in step $i$ we expose vertices $v_j$ with $j \in J_i$.
In step 1 we expose all vertices $v_j$ where $j \in J_1$ is even.
Let $S_1$ be the set of exposed neighbours of $x$.
Then $|S_1|$ is hypergeometric with mean
at least $d|J_1|/2n = \omega(\log n)$, 
so by Chernoff bounds $|S_1|=\omega(\log n)$
with probability $1-o(n^{-3})$.

After step $i$ for $i \ge 1$, we have exposed all vertices $v_j$
where $j \in J_{i'}$ with $i'<i$ or $j \in J_i$ is even.
Let $G_i$ be the random induced subgraph of $G$
obtained by deleting all exposed vertices.
Then $\delta(G_i) = d - O(d/\log n)$ 
with probability $1-o(n^{-4})$ 
by \Cref{lem:mindegree}; 
in particular, we can apply \Cref{lem:expansion} to $G_i$.

In step $i+1$ we expose all vertices $v_j$ 
where $j \in J_i$ is odd or $j \in J_{i+1}$ is even,
thus revealing $S_i^- := \{ v_j: v_{j+1} \in S_i\}$
and $T_i := \{ v_j: j \in J_{i+1} \text{ is even}\}$,
which are uniformly random disjoint subsets of $V(G_i)$
of sizes $|S_i|$ and $|J_{i+1}|/2$.
We let $S_{i+1} = T_i \cap \Gamma(S_i^-)$ 
be the set of vertices in $T_i$ with a neighbour in $S_i^-$.
By \Cref{lem:expansion}, with probability $1-o(n^{-4})$ 
we have $|S_{i+1}| \ge \tfrac{d_i|S_i||T_i|}{16n_i}$
where $n_i:=|V(G_i)| \sim n$ and $d_i := \min\{d,n_i/|S_i|\}$.
Note that if $|S_i| \le n/d$ then $|S_{i+1}| \ge |S_i|\log n$, otherwise $|S_{i+1}| \ge |T_i|/16$. 
Thus we reach some $i^*<\log n$ with $|S_{i^*}|=\Omega(|T_{i^*}|)=\Omega(n/\log^2 n)$.

Finally, when exposing $S_{i^*}^-$, the number of exposed neighbours of $y$
is hypergeometric with mean at least $d|S_{i^*}^-|/n = \omega(\log n)$,
so with probability  $1-o(n^{-4})$ we expose some neighbour of $y$,
from which we obtain the required special vertex sequence.
\end{proof}

\begin{figure}[ht]
    \centering
    \begin{tikzpicture}[scale=0.8,main node/.style={circle,draw,color=black,fill=black,inner sep=0pt,minimum width=3pt}]
        \tikzset{cross/.style={cross out, draw=black, fill=none, minimum size=2*(#1-\pgflinewidth), inner sep=0pt, outer sep=0pt}, cross/.default={2pt}}
	\tikzset{rectangle/.append style={draw=brown, ultra thick, fill=red!30}}

	    \foreach \i in {3,...,100}
	    {
	        \node[main node, scale=0.2] (aux) at (\i*0.2-0.6,0){};
	    }
	    \node[rectangle, color=red, scale=0.2] (a) at (0,0) [label=below:$v_1$]{};
	    \node[rectangle, color=red, scale=0.2] (a1) at (1,0){};
	    \node[rectangle, color=red, scale=0.2] (a2) at (3,0){};
	    \node[rectangle, color=red, scale=0.2] (a3) at (5,0){};
	     \node[rectangle, color=red, scale=0.2] (a4) at (7,0){};
	    \node[main node] (b4) at (7.2,0){};
	    
	    \node[main node] (b1) at (1.2,0){};
	    \node[main node] (b2) at (3.2,0){};
	    \node[main node] (b3) at (5.2,0){};
	    
	    \draw[line width= 1 pt, blue] (a) to [bend left=80](b1);
	    \draw[line width= 1 pt, green] (a1) to [bend left=80](b2);
	    \draw[line width= 1 pt, blue] (a2) to [bend left=80](b3);
	    \draw[line width= 1 pt, green] (a3) to [bend left=80](b4);

\draw (9,-2) circle (1);
\node[main node] (y) at ($(9,-2)+(135:1)$)
[label=right:$v_2$]{};
\node[main node] (x) at ($(9,-2)+(160:1)$)
[label=left:$v_2^-$]{};
\draw[thick, blue] (9,-2) ++(160:1) arc (160:495:1);
\draw[line width= 1 pt, blue] (y) to [bend left=0](a4);

\draw[line width= 1 pt,blue] (b3) to (a4);
\draw[line width= 1 pt,blue] (b1) to (a2);
 \draw[line width= 1 pt,blue] (b3) to (a4);

 \draw[line width= 1 pt, green] (a) to (a1);
 \draw[line width= 1 pt, green] (b2) to (a3);

 \draw [decorate,decoration={brace,amplitude=15pt,mirror}] (8,1) -- (0,1) 
    node [midway, above=0.5] {Interval $I_1$};

\draw [decorate,decoration={brace,amplitude=15pt,mirror}] (16,1) -- (8,1) 
    node [midway, above=0.5] {Interval $I_2$};

      \node[scale=2] (dots) at (18,1){$\ldots$};
      \node[scale=1] (dots) at (19,-0.6){$C_1$};
      \node[scale=1] (dots) at (10,-3){$C_2$};
   
    \end{tikzpicture}
    \caption{Incorporating the cycle $C_2$ into the long path on $V(C_1)$. The new path starts at $v_2^-$, following the blue and green paths and then 
     the intervals $I_2,I_3,\ldots, I_k$. In the next step, $v_2^-$ plays the role of $v_1$ and we find the special vertices in $I_2$. }
    \label{fig:rotation}
\end{figure}

\begin{proof}[Proof of \Cref{thm:regular}]
Let $G$ be an $n$-vertex graph with  $\delta(G)=\omega(\log^3n)$ 
and $\Delta(G)=O(\delta(G))$.
Let $F \sim G_{n,2}$ be a random $2$-regular graph on $V(G)$.
We condition on the lengths of the cycles of $F$, 
fix a $2$-factor $F^*$ on $V(G)$ with precisely these cycle lengths,
and write $F = \pi(F^*)$ where $\pi \in S_n$ is a uniformly random permutation.
By \Cref{lem:permutation properties} we can assume that $F^*$ has $k=O(\log n)$ cycles
$C_1,\dots,C_k$ and at least one cycle $C_1$ of length $\Omega(n)$.
We fix an adjacent pair $v_i^- v_i$ on each $C_i$
and delete $v_1 v_1^-$ from $C_1$ to form a path $P_1$ from $v_1$ to $v_1^-$.
We fix disjoint intervals $I_1,\dots,I_k$ in $P_1$ with each $|I_i|=\Omega(n/\log n)$.
Taking a union bound over all $x,y \in V(G)$ and $i \in [k]$, whp we obtain $F=\pi(F^*)$
satisfying the conclusion of  \Cref{lem:special} for every $x,y$ 
and $G'=G[\pi(I_i)]$ induced by $\pi(I_i)$.

Now we incorporate the cycles $C_2,\dots,C_k$ in turn.
In step $1$, to incorporate $C_2$ we apply \Cref{lem:special}
with $x=v_1$ and $y=v_2$ to obtain a special vertex sequence in $I_1$,
which we use as in \Cref{fig:rotation} to obtain a new path starting at $v_2^-$
that includes $I_2,\dots,I_k$.
In step $i \in [2,k-1]$, to incorporate $C_{i+1}$ we apply \Cref{lem:special}
with $x=v_i^-$ and $y=v_{i+1}$ to obtain a special vertex sequence in $I_i$,
which we use to obtain a new path starting at $v_{i+1}^-$ that includes $I_{i+1},\dots,I_k$.
In step $k$, having incorporated all cycles to form a Hamiltonian path from $v_k^-$ to $v_1^-$,
we apply \Cref{lem:special} with $x=v_k^-$ and $y=v_1^-$ 
to obtain a special vertex sequence in $I_k$, which provides a Hamiltonian cycle.
\end{proof}

\section{Concluding remarks}

There are several possible variations on the hamiltonicity problem
for $G \cup R$ that can be obtained by varying the properties of
the deterministic graph $G$ and the random perturbation $R$.
One such question was recently posed by Espuny D{\'\i}az~\cite{espuny},
who considered the case that $R$ is a random $C_\ell$-factor. Noting that the problem should be easier for larger $\ell$,
he asked whether for any fixed $\ell$ there is some $\alpha=\alpha(\ell)<1/2$ such that if $\delta(G)\geq \alpha n$
and if $F$ is a random $C_\ell$-factor then whp $G\cup F$ is Hamiltonian. Our methods confirm this for large $\ell$
and show moreover that $\alpha$ can tend to zero.

\begin{prop}\label{thm:cycle factor}
For any $\alpha>0$ there exists $k=k(\alpha)$ such that the following holds for all $\ell$ with $k\leq \ell\le n$ and $\ell\mid n$. 
If $G$ is an $n$-vertex graph with $\delta(G)\geq \alpha n$ 
and $G$ is a random $C_\ell$-factor on the same vertex set
then whp $G\cup F$ is Hamiltonian.  
\end{prop}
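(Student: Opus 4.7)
The plan is to adapt the multi-exposure P\'osa rotation framework from \Cref{thm:min deg} to the $C_\ell$-factor setting, exploiting the much stronger minimum-degree condition $\delta(G) \ge \alpha n$. Write $F = \pi(F^*)$ where $F^*$ is a fixed $C_\ell$-factor with $m = n/\ell$ cycles $C_1^*, \ldots, C_m^*$ and $\pi \in S_n$ is uniform; we expose $\pi$ incrementally to grow a path in $G \cup F$ one cycle at a time, then close it up. Initialise by fully exposing $\pi$ on $C_1^*$ and deleting an edge of $D_1 = \pi(C_1^*)$ to form a path $P_1$ of length $\ell$.

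In a first, purely deterministic phase, at each stage $i$ with $|V(P_{i-1})| < \alpha n/2$ the current endpoint $v_{i-1}$ has at least $\alpha n - (i-1)\ell \ge \alpha n/2$ $G$-neighbours outside the path; we pick such a neighbour $w$, expose $\pi^{-1}$ at $w$ to find its cycle $C_j^*$, expose $\pi$ on the rest of $C_j^*$, and merge $D_j$ into the path via the $G$-edge $v_{i-1}w$. This produces a path $P^*$ of length $\ge \alpha n/2$ with no randomness required. In the second phase, for each remaining cycle we exploit the randomness of the restriction of $\pi$ to the still-unexposed part: expose $\pi$ on $C_j^*$ to obtain $D_j$, apply P\'osa rotations on the current path (whose induced subgraph has linear minimum degree by Chernoff bounds, yielding a set $R$ of reachable endpoints with $|R| = \Omega(\alpha^2 n)$), and use a concentration argument in the spirit of \Cref{lem:hitpairs} to show that whp some $r \in R$ has a $G$-edge to $V(D_j)$, enabling the merge. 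Choosing $k(\alpha)$ sufficiently large, say $k(\alpha) = \Theta(1/\alpha^3)$, makes the per-stage failure probability $o(\ell/n)$, so a union bound over the $O(m)$ remaining stages goes through. Closure then proceeds as in step $k+1$ of \Cref{thm:min deg}: P\'osa rotations at both endpoints of the resulting Hamilton path give sets $R_u, R_v$ of size $\Omega(n)$, and we pick an edge $xy$ of the path with $xv, yu \in E(G)$ to close into a Hamilton cycle.

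The main obstacle lies in controlling P\'osa expansion in the second phase: one must argue that $V(P)$, although built by a $G$-dependent algorithm, is generic enough that $G[V(P)]$ retains linear minimum degree, and that $N_G(R)$ has enough intersection with the unexposed part so that each random $V(D_j)$ hits a $G$-neighbour of $R$ with the required high probability. Tracking these dependencies carefully through the exposure process, and choosing $k(\alpha)$ so that all concentration and union bounds survive, are the essential technical steps.
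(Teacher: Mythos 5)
Your phase-1 construction and the overall multi-exposure philosophy are reasonable, but there is a genuine gap in phase 2 that sinks the argument exactly in the regime the proposition is designed for, namely when $\ell$ is a large constant (close to $k(\alpha)$).

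In your second phase you need, for each remaining cycle $C_j$, that the random $\ell$-set $V(D_j)$ hits $N_G(R)$, where $R$ is the deterministic set of P\'osa-reachable endpoints. Even granting $|N_G(R) \cap (\text{unexposed})| = \Omega_\alpha(n)$, the failure probability per stage is of order $(1-c\alpha)^\ell \approx e^{-c\alpha\ell}$. You claim that taking $k(\alpha) = \Theta(1/\alpha^3)$ makes this $o(\ell/n)$, but for $\ell$ a constant (indeed for any $\ell = O(\alpha^{-1}\log n)$) the per-stage failure probability is a constant bounded away from zero, nowhere near $o(1/n)$. A union bound over the $\Theta(n/\ell)$ remaining cycles then gives an expected number of failures that tends to infinity, and the argument does not close. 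This is not a matter of ``tracking dependencies carefully''; the per-stage union-bound strategy is structurally unable to handle constant~$\ell$.

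The paper sidesteps this by never demanding per-stage whp success. It exposes the vertices sparingly in phase 1 (only a pair per cycle of $F$, i.e.\ $2n/\ell$ vertices, linked via an auxiliary digraph of minimum outdegree $\ge \alpha n/3\ell$) so that the length-$\alpha n/3$ path $P$ remains mostly \emph{unexposed}. In phase 2 it then draws on that unexposed randomness: it reveals consecutive pairs $(a,b)$ on $P$ one at a time, each having a fixed \emph{constant} success probability $\ge \alpha^2/32$ of being usable to absorb the next cycle, and applies a Chernoff bound to the \emph{total count} of successes among the $\ge \alpha n/8$ available pairs. Since the expected count $\Theta(\alpha^3 n)$ strictly exceeds the required $n/\ell$ once $\ell \ge k = 1000/\alpha^3$, this yields whp enough successes, with no per-stage union bound at all. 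Your approach also discards too much randomness up front --- by fully exposing the $\alpha n/2$-vertex path in phase 1 you are forced to extract all subsequent randomness from the incoming cycles $D_j$ alone, which is exactly what leads to the weak per-stage bound. To fix your proof you would need to either keep most of $P$ unexposed (as the paper does) or replace the per-stage union bound with an aggregate success-count argument; as written it fails for $\ell$ below roughly $\alpha^{-1}\log n$.
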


The main idea of the proof is the same as in our main results, so we will just give a brief sketch here.
Let $k=1000/\alpha^3$, and fix $\ell \in [k,n]$ such that $\ell$ divides n.
The first step is to find a long path $P$ where most vertices are not yet exposed. 
If $\ell> n/\log ^2 n$ we will simply choose $P$ within one of the cycles in $F$,
so let us assume $\ell\leq n/\log^2 n$.
We expose one pair of adjacent vertices in each cycle of $F$, which will call red and blue,
then consider  the auxiliary directed graph whose vertex set is the set of obtained pairs, 
with a directed edge between two pairs if the red vertex of the first pair is adjacent to the blue vertex of the second pair.
By Chernoff bounds this digraph whp has minimum outdegree at least $\alpha n/3\ell$,
so we can greedily find a directed path of this length,
which we use to concatenate the corresponding cycles,
obtaining a path $P$ of length $\alpha n/3$ with $2n/\ell$ exposed vertices.
GIven $P$, the second step of the proof follows the same strategy as before:
expose consecutive pairs $(a,b)$ of unexposed vertices of $P$
and use these to absorb each remaining cycle.
When we consider some such pair $(a,b)$,
assuming that the current graph induced by the unexposed vertices has minimal degree at least $\alpha n/4$,
with probability at least $\alpha^2/32$ the first vertex of $P$ is adjacent to $b$
and $a$ is adjacent to some fixed vertex $v$ in a cycle we wish to absorb.
We have at least $\alpha n/8$ pairs $(a,b)$,
so by Chernoff bounds and our choice of $k(\alpha)$
we whp obtain $n/\ell$ successes before running out of pairs.
Similarly, if $\ell> n/\log ^2 n$ we have $\ell/2$ pairs
and whp obtain $n/\ell$ successes before running out of pairs.

There are many classical results for which one might seek randomly perturbed versions
(e.g. \cite{chvatal1972hamilton,chvatal1972note,jackson1980hamilton,nash1971edge})
but these results typically concern graphs that are quite dense,
whereas it seems more interesting to obtain results on hamiltonicity for sparse graphs.
For example, a recent breakthrough by 
Dragani\'c, Montgomery, Munh\'a Correia, Pokrovskiy and Sudakov~\cite{draganic2024hamiltonicity} 
shows hamiltonicity for constant degree combinatorial expanders, which also implies a conjecture
of Krivelevich and Sudakov~\cite{krivelevich2003sparse} on hamiltonicity of spectral expanders.
Considering hamiltonicity of sparse graphs,
our final open problem is to improve Theorem \ref{thm:regular}
under the additional assumption that $G$ is regular (not just approximately regular).
It may be that no further assumption is required.

\begin{conj}
Let $G$ be an $n$-vertex regular graph and $F \sim G_{n,2}$ 
be a random $2$-regular graph on the same vertex set as $G$.
Then whp $G\cup F$ is Hamiltonian. 
\end{conj}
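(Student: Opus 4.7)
The plan is to split based on the common degree $d := \delta(G) = \Delta(G)$. When $d = \omega(\log^3 n)$ the conjecture follows directly from \Cref{thm:regular}, so the task reduces to handling $d = O(\log^3 n)$, which includes extreme cases such as $G$ being a perfect matching ($d=1$), a disjoint union of short cycles ($d=2$), or a vertex-disjoint union of small cliques.

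In this low-degree regime the natural strategy is to exhibit $G\cup F$ as a combinatorial expander and invoke the recent Hamiltonicity criterion of Dragani\'c, Montgomery, Munh\'a Correia, Pokrovskiy and Sudakov~\cite{draganic2024hamiltonicity}, which shows that bounded-degree graphs satisfying $|N(S)\setminus S|\geq 3|S|$ for all $S$ in a suitable range of sizes are Hamiltonian. Since $G\cup F$ is $(d+2)$-regular (ignoring parallel edges), it suffices to prove the required expansion whp. Writing $F=\pi(F^*)$ as in the proofs of \Cref{thm:min deg} and \Cref{thm:regular}, for any fixed $S$ of size $s$ the number of $F$-edges leaving $S$ is concentrated around $2s(1-s/n)$ with exponential-in-$s$ decay by \Cref{lem:tal}. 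Combined with the $d$-regularity of $G$ (which yields $\Omega(d|S|)$ external $G$-edges whenever $e_G(S) < ds/4$), a straightforward union bound over $\binom{n}{s}$ candidates handles all $s \geq C\log n$.

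For the small-set range $s<C\log n$ one has to be more careful, since the naive union bound is loose. The concern is a set $S$ with $e_G(S)\geq ds/4$ (so $G$ itself gives weak expansion at $S$) together with few external $F$-edges. The first condition is strongly restrictive: $S$ must contain a dense subgraph of the $d$-regular graph $G$, and for each such $S$ one has $e_G(S) = \Omega(ds)$, which by a standard counting argument drastically reduces the number of candidate sets compared with $\binom{n}{s}$. Combining this with the $F$-tail bound via a refined union bound over the restricted family should cover $s$ down to roughly $d^{O(1)}$.

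The main obstacle is the genuinely small-set regime when $d$ is a fixed constant, where both the structural restriction on $G[S]$ and the $F$-concentration become too weak for a union bound to close the gap. In the extreme case $d=1$ the conjecture amounts to showing that a uniformly random cubic graph in which a perfect matching has been prescribed is whp Hamiltonian; while uniform random cubic graphs are famously Hamiltonian, transferring this to the conditional distribution at hand would likely require contiguity results and Robinson--Wormald-style switching arguments within the configuration model. This, together with the corresponding analysis for the first few small values of $d$, is where I expect the principal technical difficulty to lie.
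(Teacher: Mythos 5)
This statement is presented in the paper as an open \emph{conjecture}: there is no proof of it in the paper, only a short discussion of partial evidence. You are therefore not missing a proof that exists; you have sketched a possible approach and, to your credit, you explicitly flag that it does not close for small constant $d$, which is indeed where the problem remains open.

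That said, there are concrete issues with the sketch even in the regimes you do address. First, the expansion criterion you quote from \cite{draganic2024hamiltonicity} cannot be the literal statement ``$|N(S)\setminus S|\ge 3|S|$ for all $S$ in a suitable range'' if it is to apply here: for $d=1$ the graph $G\cup F$ is (sub)cubic, so $|N(S)\setminus S|\le 3|S|$ always, and whp $F$ contains short paths and short cycles (cf.\ \Cref{lem:permutation properties}), producing sets $S$ of polylogarithmic size with $|N(S)\setminus S|\le |S|+O(1)$. So any vertex-expansion criterion with a constant factor of $3$ fails outright for $d\in\{1,2\}$, independently of how you union-bound. Second, even for larger $d$ and $C\log n\le s\le n^{0.99}$, your ``straightforward union bound'' does not close on its own: Talagrand via \Cref{lem:tal} gives a per-set failure probability of only $\exp(-\Theta(s))$, whereas $\binom{n}{s}\approx\exp(\Theta(s\log(n/s)))$, so the structural restriction on sets $S$ with $e_G(S)$ large would have to carry real weight throughout this range, not only for small $s$. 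You gesture at this for small $s$, but the same issue is present at all sublinear scales.

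For small $d$, the paper's own partial evidence takes a different (also incomplete) route: if $G$ contains a perfect matching $M$, then $M\cup F$ is contiguous to a random cubic graph and hence whp Hamiltonian --- which is precisely the contiguity reduction you anticipate for $d=1$; if $d$ is even, Petersen's theorem reduces the problem to a $2$-factor in $G$ plus $F$ (for which Frieze's result on unions of two random $2$-factors is cited as a first step); and by Hall's theorem the general regular case reduces to spanning subgraphs whose components are edges and cycles. None of these, nor your sketch, amounts to a proof of the conjecture.
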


As evidence towards this conjecture,
note that if $G$ contains a perfect matching $M$
(e.g.~if $G$ is one-regular take $M=G$)
then $M \cup F$ is contiguous to a random cubic graph 
(see \cite[Theorem 9.34]{janson2011random})
so is whp Hamiltonian
(see \cite[Theorem 9.20]{janson2011random}).
If $G$ is regular of even degree
then $G$ contains a $2$-factor (Petersen's Theorem),
so this case would follow from the $2$-regular case;
a first step here is a result of Frieze~\cite{frieze2001hamilton}
that the union of two random $2$-factors is whp Hamiltonian.
For the general conjecture, by Hall's theorem any regular graph $G$
contains a spanning subgraph $H$ where each component is a cycle or an edge,
so it would suffice to show for any such $H$ that whp $H \cup F$ is Hamiltonian.

\end{document}